\def\leq{\leqslant}
\def\geq{\geqslant}
\def\N{\mathbb{N}}
\def\R{\mathbb{R}}
\DeclareMathOperator{\diam}{diam}
\DeclareMathOperator{\Int}{Int}
\newtheorem{Proposition}{Proposition}[section]
\newtheorem{lemma}[Proposition]{Lemma}
\newtheorem{definition}[Proposition]{Definition}
\newtheorem{Corollary}[Proposition]{Corollary}
\newtheorem{remark} [Proposition]{Remark}
\newtheorem{theorem}[Proposition]{Theorem}
\begin{document}
%\linenumbers
\title{Complexity of injective piecewise contracting  interval maps}

\author{E.\ Catsigeras$^1$, P.\ Guiraud$^2$ and A.\ Meyroneinc$^3$}
%\date{}
\maketitle
\begin{center}
\begin{small}
$^1$ Instituto de Matem\'{a}tica y Estad\'istica Rafael Laguardia, Universidad de la Rep\'{u}blica,\\
Montevideo, Uruguay\\
\texttt{eleonora@fing.edu.uy}

$^2$ Centro de Investigaci\'on y Modelamiento de Fen\'omenos Aleatorios Valpara\'iso, Facultad de Ingenier\'ia, Universidad de Valpara\'{\i}so,\\
Valpara\'{\i}so, Chile\\
\texttt{pierre.guiraud@uv.cl}

$^3$ Departamento de Matem\'aticas, Instituto Venezolano de Investigaciones Cient\'ificas,\\
Apartado 20632, Caracas 1020A, Venezuela\\
\texttt{ameyrone@ivic.gob.ve}
\end{small}

\begin{abstract}

We study the complexity of the itineraries of injective piecewise contracting maps on the interval.
We prove that for any such map the complexity function of any itinerary is eventually affine.
We also prove  that the growth rate of the complexity is bounded  from above by  the number $N-1$ of discontinuities of the map. To show that this bound is optimal, we construct piecewise affine contracting maps whose itineraries all have the complexity $(N-1)n+1$. In these examples, the asymptotic dynamics takes place in a minimal Cantor set containing all the discontinuities.

\end{abstract}
\end{center}

Keywords: Interval map, Piecewise contraction, Complexity, Minimal Cantor set.

MSC 2010:
37E05,
37E15,
37B10

\section{Introduction} \label{Introduction}

Let $(X,d)$ be a compact metric space and $\{X_i\}_{i=1}^N$ be a finite collection of $N \geq 2$ non-empty disjoint open subsets such that  $X = \bigcup_{i=1}^{N} \overline{X_i}$.
Let $f: X \to X$ and assume $f$ discontinuous on the set $\Delta := \{x\in \overline{X_i}\cap\overline{X_j}, i\neq j\in\{1,\dots,N\}\}$.
If there exits a constant $\lambda \in (0,1)$ such that for any $i \in \{1,\ldots,N\}$ the map $f$ satisfies
\begin{equation}\label{CONTRACTION}
d(f(x),f(y)) \leq \lambda \, d(x,y)\qquad \forall\,x,y\in X_i,
\end{equation}
then $f$ is called a {\em piecewise contracting map} and each element of the collection $\{X_i\}_{i=1}^N$ is called a {\em contraction piece}.

In  \cite{CGMU16}, we  explored the diversity of asymptotic dynamics of these systems, and proved that a rich dynamics can appear if the attractor contains discontinuity points.
In particular,  we exhibited three-dimensional  examples with exponential complexity and positive topological entropy. On the other hand, if the attractor does not contain discontinuity points, then its dynamics  is simple, just composed by a finite number of periodic orbits.

In the present paper, we remain interested in the diversity of the dynamics but we restrict the study to a class  of one-dimensional  piecewise contracting maps. Our objective is to
 determine  the range of all the possible complexity functions in the whole considered class.
In particular, we are interested in the relation between certain features of the discontinuity points and the complexity of the dynamics.

If a piecewise contracting map $f$ is defined on  a compact interval and  each contraction piece is an open interval, we say that  $f$ is a {\it piecewise contracting interval map}.
For these systems, it has been shown that generically the asymptotic dynamics is periodic, first for injective maps \cite{B06, N1, N2} and later for more general one-dimensional maps \cite{N3}.
In this paper, we are instead interested in the  non-periodic asymptotic dynamics.
In dimension one, there are   few known examples of piecewise contracting maps with   non-periodic attractors \cite{BC99,CFLM06,GT88,V87}, and  none of them has orbits that accumulate in more than one discontinuity point. Therefore, little is known about the maximum complexity of the dynamics when the interval map has an arbitrary (finite) number of discontinuity points.

By complexity of a map, we refer to the complexity function of the  itineraries of its orbits.
To define the itineraries of a piecewise contracting map $f$, consider the set $\widetilde X$  of those points of $X$ whose orbit never intersects $\Delta$, that is
\begin{equation}\label{XTILDE}
\widetilde X := \bigcap _{n= 0}^{+ \infty} f^{-n}(X\setminus\Delta),
\end{equation}
and assume that $\widetilde{X} $ is non-empty. We say that the sequence $\theta=\{\theta_t\}_{t\in\N} \in \{1,2, \dots, N\}^{\mathbb{N}}$ is the \emph{itinerary} of $x \in \widetilde{X}$ if for every $t \in \mathbb{N}$ and $i \in \{1,\ldots,N\}$ we have $\theta_t = i$  if and only if $f^t(x) \in X_i$. The \emph{complexity function} of
a sequence $\theta$ is the function defined for every $n\geq 1$  by
\[
p(\theta,n) := \# L_n(\theta)\qquad\text{where}\qquad L_n(\theta) := \{ \theta_t \dots \theta_{t+n-1}, \; t \geq 0 \} \qquad\forall\, n\geq 1,
\]
that is, $p(\theta, n)$  gives the number of different words of length $n$ contained in  $\theta$. Therefore, the complexity function of a sequence is  a non-decreasing function of $n$.
Also, if there exists $n_0\geq 1$ such that $p(\theta,n_0+1)=p(\theta,n_0)$, then $p(\theta,n)=p(\theta,n_0)$ for all $n\geq n_0$. This implies that a symbolic sequence is eventually periodic if and only if its complexity function is eventually constant.

In this paper we consider piecewise contracting maps satisfying a \lq\lq separation property\rq\rq. To define this property, first note, from inequality \eqref{CONTRACTION}, that for any $i\in\{1,\dots,N\}$ the restriction $f|_{X_i}$ of $f$ to the piece $X_i$ admits a continuous extension $f_i:\overline{X_i}\to X$  which also satisfies inequality \eqref{CONTRACTION} on $\overline{X_i}$.

\begin{definition} {\bf (Separation)} \label{definitionSeparationProperty}\em
We say that a piecewise contracting map $f$ satisfies the \em separation property \em if for every $i\in\{1,\dots,N\}$ the continuous extension $f_i:\overline X_i\to X$ is injective and $f_i(\overline X_i) \cap f_j(\overline X_j) = \emptyset$ for any $j\in\{1,\dots,N\}$ such that $j\neq i$.
\end{definition}

\noindent A map $f$ which satisfies the separation property is obviously injective on $X\setminus\Delta$, but not
necessarily on the whole set $X$. A map $f$ which is injective on $X$ does not satisfy the separation property if and only if there are $i$ and $j$ in $\{1,\dots,N\}$ such that
$\lim\limits_{x\to y}f|_{X_i}(x)=\lim\limits_{x\to z}f|_{X_j}(x)$ for some $y\in\Delta\cap\overline{X}_i$ and $z\in\Delta\cap\overline{X}_j$, with $y\neq z$ if $i=j$. It follows that not every injective map satisfies the separation property. Nevertheless, in dimension one, every injective map whose discontinuities are all of the first kind satisfies the separation property.

Our main result is the following Theorem \ref{TH}. We will later complement its statement with the additional results given by Theorem \ref{THCOMPNBLR} and Theorem \ref{TH3} about the relations between  the complexity function and the dynamical asymptotic behaviour of the orbits near  the discontinuity points.

\begin{theorem}\label{TH} 1) Let $\theta$ be the itinerary of an orbit of a piecewise contracting interval map which has $N$ contraction pieces and satisfies the separation property. Then, there exist $\alpha\in\{0,1,\dots, N-1\}$, $\beta\geq 1$ and $m_0\geq 1$, such that the complexity function of  $\theta$ satisfies
\begin{equation}\label{COMPTH}
p(\theta,n)= \alpha n+\beta \qquad\forall\,n\geq m_0,
\end{equation}
with $\beta=1$ if $\alpha=N-1$.

2) For every $N\geq 2$, there exists a piecewise affine contracting interval map $f$ which has $N$ contraction pieces and satisfies the separation property, such that
\begin{equation}\label{COMP2TH}
p(\theta,n)= (N-1) n+1 \qquad\forall\,n\geq 1,
\end{equation}
for every itinerary $\theta$ of $f$.
\end{theorem}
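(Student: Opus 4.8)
The plan is to handle both parts through one circle of ideas: the interplay between contraction and the finitely many discontinuities, organised around the standard first-difference formula for complexity. Recall that for any sequence over a finite alphabet one has
\[
p(\theta,n+1)-p(\theta,n)=\sum_{w\in RS_n(\theta)}\bigl(d^+(w)-1\bigr),
\]
where $RS_n(\theta)$ is the set of right-special factors of $\theta$ of length $n$ (factors of $\theta$ admitting at least two distinct one-letter right extensions in the language of $\theta$) and $d^+(w)$ is the number of such extensions. Since $p(\theta,\cdot)$ is non-decreasing and, as noted in the excerpt, eventually constant exactly when $\theta$ is eventually periodic, to prove part~1) it suffices to show that this sum is eventually equal to a constant $\alpha\le N-1$, and that $\alpha=N-1$ forces $\beta=1$.

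The geometric fact I would exploit is that a right-special factor encodes two orbit segments that stay together and then split. If $w=\theta_t\cdots\theta_{t+n-1}=\theta_{t'}\cdots\theta_{t'+n-1}$ but $\theta_{t+n}\neq\theta_{t'+n}$, then by \eqref{CONTRACTION} the points $f^{t}(x)$ and $f^{t'}(x)$ lie within $\lambda^{n}\diam(X)$ of one another, while their images lie in two distinct pieces $X_i$ and $X_j$; on the interval these pieces are locally separated by a discontinuity $\delta\in\Delta$, and both images lie within $\lambda^{n}$ of $\delta$. Thus every split is attached to one of the at most $N-1$ discontinuities. As $n$ grows, a finite right-special factor either ceases to be special or prolongs to a longer one, and I would argue that the persistent splits organise into finitely many infinite right-special branches of the special-factor tree, each branch corresponding through the above estimate to a discontinuity point lying in the closure of the orbit. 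Since the interval yields only binary splits and there are at most $N-1$ discontinuities, for large $n$ the first difference equals the (constant) number of such branches, giving an eventually affine $p(\theta,\cdot)$ with slope $\alpha\le N-1$.

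The first real obstacle is the stabilisation, i.e. showing that the number of active branches is eventually constant. Here I would use the separation property crucially: two nearby orbit points on opposite sides of $\delta$ have disjoint images under the $f_i$, so once a split has occurred it is never "healed," and the number of persistent branches is a non-increasing, integer-valued quantity, hence eventually constant, equal to $\alpha$. For the rigidity statement, when $\alpha=N-1$ every discontinuity sustains its maximal binary split for all large $n$; I would show this forces the subshift generated by $\theta$ to be the natural coding of a minimal $N$-interval exchange, for which there is exactly one right-special factor of each length per discontinuity and $p(\theta,1)=N$. Propagating the affine formula $p(\theta,n)=(N-1)n+\beta$ back from any large $m_0$ to where it matches this value then yields $\beta=1$.

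For part~2) the model to keep in mind is precisely the interval exchange: the natural coding of an $N$-interval exchange $T$ satisfying Keane's condition has exactly $N-1$ right-special factors of each length, each with two extensions, hence complexity $(N-1)n+1$ for all $n\ge1$ (consistent with $p(\cdot,1)=N$). My plan is to realise such a coding by a genuine piecewise affine contraction: fix $\lambda\in(0,1)$ and build $f$ as a "contracted exchange," replacing the $N$ isometric branches of $T$ by affine branches of slope $\lambda$ whose images are arranged in the same combinatorial order as those of $T$, with the translation data tuned so that the $N-1$ discontinuities all lie in the attractor $\Lambda=\bigcap_{n\ge0}f^{n}(X)$. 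Since $f$ contracts, $\Lambda$ is a measure-zero Cantor set, and I would verify that $f|_\Lambda$ is a minimal homeomorphism whose itinerary coding is conjugate to that of $T$; minimality then forces every itinerary of $f$ to share the language of $T$, hence to have complexity exactly $(N-1)n+1$ for all $n\ge1$. The hard part will be the parameter choice: one must establish a Keane-type no-connection condition for $f$ (a Diophantine/irrationality requirement on the translations) guaranteeing that the orbit of each discontinuity is infinite and aperiodic, so that no collision ever merges two branches—which would strictly lower the complexity—and the semi-conjugacy with $T$ is an exact conjugacy on codings; verifying the separation property for the resulting $f$ is then routine.
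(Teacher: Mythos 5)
Your treatment of part 1) is, underneath the change of vocabulary, the paper's own argument: the right-special-factor count $\sum_{w}(d^+(w)-1)$ is exactly the paper's $\sum_{k\ge 2}(k-1)\#L_n^k(\theta)$, and your ``branches attached to discontinuities'' are the paper's sets $\Delta^n_{lr}(x)$ of discontinuities visited from both sides inside a visited atom of generation $n$. Two points need repair. First, the stabilisation: what actually makes the first difference eventually constant is that $\Delta^{n+1}_{lr}(x)\subset\Delta^n_{lr}(x)$ (atoms of generation $n+1$ are nested in atoms of generation $n$ and, by separation, pairwise disjoint), so this decreasing sequence of subsets of the \emph{finite} set $\Delta$ stabilises. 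Your phrase ``once a split has occurred it is never healed'' argues persistence of splits, i.e.\ a lower bound, which is the wrong direction; the needed statement is that a discontinuity which ceases to be visited on both sides within its generation-$n$ atom never contributes again. Second, for $\alpha=N-1\Rightarrow\beta=1$ your detour through ``the subshift is the coding of a minimal $N$-interval exchange'' is both unproved and unnecessary: if all $N-1$ discontinuities are lr-recurrently visited, the sandwich $\#\Delta_{lr}(x)\le p(\theta,n+1)-p(\theta,n)\le\#\Delta$ forces the first difference to equal $N-1$ for \emph{every} $n\ge1$, and $p(\theta,1)=N$ because all $N$ pieces are then visited; this gives $\beta=1$ directly.

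The genuine gap is in part 2). Your plan --- take an $N$-interval exchange satisfying Keane's condition and ``tune the translation data'' of a $\lambda$-affine analogue so that no connection ever occurs and all discontinuities lie in the attractor --- restates the theorem rather than proving it. For $N=2$ this is the known contracted-rotation theory (existence of parameters without periodic orbits via rotation numbers), but for $N\ge3$ you give no mechanism for producing such parameters: generically a piecewise contracting interval map has only periodic attractors, the set of good parameters is expected to be thin, and establishing that it is non-empty for every $N$ is precisely the content of the statement. The paper resolves this by induction on $N$: it takes an $(N-1)$-discontinuity example satisfying the required properties, chooses a ``well-cutting'' orbit $\{\xi_r\}$ in its attractor, and inserts a gap of length $\lambda^r$ at each $\xi_r$ ($r\ge1$) to manufacture one new discontinuity at $\xi_0$ while preserving slope $\lambda$, separation, the Cantor attractor, and the lr-recurrent visits. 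Without such a construction (or a renormalisation/parameter-exclusion argument you have not supplied), your part 2) does not go through. Two smaller issues: minimality of $f|_\Lambda$ does not by itself control the transient prefix of itineraries of points outside $\Lambda$, and obtaining \eqref{COMP2TH} for all $n\ge1$ (not merely $n\ge n_0$) requires arranging that no atom of generation $1$ contains two discontinuities, which the paper must secure separately (Lemma \ref{N01}).
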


As mention above, generically in the space of piecewise contracting interval maps, all the orbits are attracted by periodic orbits. Therefore, generically, the itinerary of any orbit is eventually periodic and has an eventually constant complexity function. In other words, $\alpha=0$ in equality \eqref{COMPTH}. Nevertheless, non-periodic attractors do appear when some orbits accumulate on discontinuity points \cite{CGMU16}. In dimension two or three, this can produce itineraries of polynomial or exponential complexity \cite{CGMU16, KR06, LU06}. But in contrast,  Theorem \ref{TH} proves that in dimension one the complexity of any non-periodic itinerary is   affine.

To prove Theorem \ref{TH}, we will deduce equality \eqref{COMPTH} from precise results stated in Lemma \ref{PROCOMPNBNLR} and Theorem \ref{THCOMPNBLR}, which relate the complexity of an itinerary with the recurrence properties of the corresponding orbit arbitrarily near the discontinuity points.
In fact, the value of $\alpha$   equals the number of discontinuity points on which the orbit accumulates from both sides, and therefore is bounded above by the number of discontinuities contained in the attractor (which is at most $N-1$). On the other hand, unless $\alpha=(N-1)$, the constant $\beta$ depends on the transient behaviour of the dynamics and can be arbitrarily large; see relation \eqref{KBORNES}.

Part 1) of Theorem \ref{TH} states that the complexity of an itinerary is at most equal to $(N-1)n+1$.
For $N=2$, there are known examples of piecewise contracting maps whose itineraries have such a Sturmian complexity \cite{B93,BC99,C99,CFLM06,GT88,V87}. In those examples, the attractor is a Cantor set supporting a minimal dynamics.
Part 2) of Theorem \ref{TH} states that for any value of $N\geq 2$  there also exists  a piecewise contracting map
with ``full" complexity. Thereby, it establishes the optimality of the upper bound $N-1 $ for the growth coefficient $\alpha $ of the complexity function, for any number   $N \geq 2$ of contracting pieces.

To prove the existence of those maps with full complexity,  by induction on $N$ we construct for any $N\geq 3$ a piecewise contracting map whose complexity function satisfies equality \eqref{COMP2TH}, using as a base case a known example with $N=2$ and a Sturmian complexity.
As a consequence, we will prove with Theorem \ref{TH3} that the attractor of  each of  these maps inherit the Cantor structure and minimality of the attractor of the base case.

The proof of part 2) of Theorem \ref{TH}  provides for each $N \geq 2$ a map whose all itineraries have the same full complexity. But we note that the method also allows the construction of  examples for which different affine complexities coexist (for different orbits).

We prove part 1) of Theorem \ref{TH} in Section \ref{COMPI}, and part 2)  in Section \ref{SFULL}.

\section{Complexity of the itineraries}\label{COMPI}

\subsection{Preliminary general results on itineraries}

In this subsection we give some preliminary results that are not specific to piecewise contracting interval maps. In fact, here $X$ is not necessarily an interval and $f:X\to X$ may not satisfy the inequality \eqref{CONTRACTION}, provided it admits continuous extensions on each continuity piece $X_i$.

\begin{definition}\label{Def_Atoms}{\bf (Atoms)} \em
For every $i \in \{1, \dots, N\}$ let $F_i : \mathcal{P}(X) \to \mathcal{P}(X)$ be defined by $F_i(A) = \overline{f(A \cap X_i)}$ for all $A \in \mathcal{P}(X)$, where $\mathcal{P}(X)$ denotes the  set of parts of $X$.
Let $n \geq 1$ and $(i_1, \ldots, i_n) \in \{1, \dots, N\}^n$. We say that $A_{i_1, \ldots i_n}:=F_{i_n} \circ F_{i_{n-1}} \circ \cdots \circ F_{i_1}(X)$ is an \emph{atom of generation} $n$ if it is non-empty.
We denote $\mathcal{A}_n$ the set of all the atoms of generation $n$.
\end{definition}

\begin{remark}\label{RKATOMS}\em In the sequel we will often use the following basic properties of the atoms:
by construction,  $A_{i_1 i_2 \ldots i_n} \subset A_{i_2 \ldots i_n} \subset  \ldots \subset A_{i_n}$, and if $f$ is piecewise contracting then $\max\limits_{A\in\mathcal{A}_{n+1}}\diam(A)\leq\lambda\max\limits_{A\in\mathcal{A}_{n}}\diam(A)$ for all $n\geq 1$,
where $\diam(A)$ is the diameter of $A$.
\end{remark}

As shown by the following Lemma \ref{Atom_Unicity}, the separation property implies that the atoms of a same generation are pairwise disjoint.

\smallskip
\begin{lemma}\label{Atom_Unicity}
Suppose that $f$ satisfies the separation property. For every $n \geq 1$
\begin{enumerate}
 \item  if  $A, B \in \mathcal{A}_n$ are such that $A \cap B \neq \emptyset$, then $A=B$,
\item  if $A_{i_1 \ldots i_n}$, $A_{j_1 \ldots j_n}\in\mathcal{A}_n$ and $A_{i_1 \ldots i_n} = A_{j_1 \ldots j_n}$, then $(i_1, \ldots, i_n) = (j_1, \ldots, j_n)$.
\end{enumerate}
\end{lemma}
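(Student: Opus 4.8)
The plan is to prove both statements by carefully unwinding the recursive definition of atoms together with the separation property. Recall that each atom $A_{i_1 \ldots i_n} = F_{i_n} \circ \cdots \circ F_{i_1}(X)$ is, by Remark \ref{RKATOMS}, contained in the single-symbol atom $A_{i_n} = \overline{f_{i_n}(\overline{X_{i_n}})}$ (taking closures appropriately). The key observation driving the whole argument is that the separation property forces the last symbol to be recoverable from the atom itself: since the images $f_i(\overline{X_i})$ are pairwise disjoint, the closures $A_i = \overline{f_i(\overline{X_i})}$ cannot overlap in their interiors in a way that would let two distinct final symbols produce intersecting atoms.

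For part 1, I would argue by induction on $n$. The base case $n=1$ asks that if $A_i \cap A_j \neq \emptyset$ then $A_i = A_j$, and moreover (combined with part 2 at level $1$) that $A_i = A_j$ forces $i=j$. This is where the separation property enters most directly: because $f_i(\overline{X_i}) \cap f_j(\overline{X_j}) = \emptyset$ for $i \neq j$, the sets $A_i$ and $A_j$ for $i\neq j$ can at worst share boundary points, and I expect the authors have arranged (via the separation hypothesis) that even this is excluded, so distinct symbols give disjoint atoms at generation one. For the inductive step, suppose the claim holds at generation $n$ and take $A_{i_1\ldots i_{n+1}} \cap A_{j_1 \ldots j_{n+1}} \neq \emptyset$. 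Since these atoms sit inside $A_{i_{n+1}}$ and $A_{j_{n+1}}$ respectively, a nonempty intersection first forces $i_{n+1} = j_{n+1}$ by the base case. I would then pull back through the injective extension $f_{i_{n+1}}$: both atoms are images under $F_{i_{n+1}}$ of $A_{i_1 \ldots i_n}$ and $A_{j_1 \ldots j_n}$, and injectivity of $f_{i_{n+1}}$ on $\overline{X_{i_{n+1}}}$ lets me transport the intersection back to a nonempty intersection of $A_{i_1 \ldots i_n} \cap \overline{X_{i_{n+1}}}$ with $A_{j_1 \ldots j_n}\cap \overline{X_{i_{n+1}}}$, and hence of the generation-$n$ atoms, so the induction hypothesis yields $A_{i_1\ldots i_n} = A_{j_1 \ldots j_n}$ and thus equality of the full atoms.

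For part 2, I would again induct on $n$, observing that equality $A_{i_1 \ldots i_n} = A_{j_1 \ldots j_n}$ in particular gives a nonempty intersection, so part 1 already delivers the needed machinery. The base case is the generation-one statement $A_i = A_j \Rightarrow i=j$, which is immediate from the disjointness of the $f_i(\overline{X_i})$. For the step, equality of the two atoms of generation $n+1$ forces (via their containment in $A_{i_{n+1}}, A_{j_{n+1}}$ and the base case) that $i_{n+1}=j_{n+1}$; then applying the injectivity of $f_{i_{n+1}}$ to undo the last map shows the generation-$n$ atoms coincide, and the induction hypothesis finishes by identifying the remaining indices.

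The main obstacle I anticipate is handling the closure operations cleanly, and in particular showing that transporting an intersection backward through $F_{i_{n+1}}$ does not introduce spurious boundary points that would spoil injectivity: taking the closure in $F_i(A) = \overline{f(A \cap X_i)}$ means two atoms could conceivably meet only along a shared boundary point arising from the closure, even when their preimages are genuinely disjoint. I would need the separation property to rule this out, presumably by using that $f_i$ is a continuous injective extension to the compact set $\overline{X_i}$, so that it is a homeomorphism onto its image and commutes appropriately with closure; this makes the pullback of a nonempty closed intersection a nonempty closed intersection at the previous generation. Making that interchange of closure and preimage rigorous, together with confirming that $i \neq i'$ truly yields disjoint (not merely interior-disjoint) single-symbol atoms, is the delicate point on which both parts rest.
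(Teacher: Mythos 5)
Your proposal is correct and follows essentially the same route as the paper's proof: induction on $n$ for both parts, with the separation property identifying the last symbol and the injectivity of the continuous extension $f_{i_{n+1}}$ transporting the intersection back to generation $n$. The closure issue you flag is resolved exactly as you anticipate --- $f_i$ is a continuous injection on the compact set $\overline{X_i}$, hence a homeomorphism onto its image, so $F_i(A)=f_i(\overline{A\cap X_i})\subset f_i(\overline{X_i})$ and, the atoms being closed, $\overline{C}\cap\overline{D}=C\cap D$ --- which is precisely how the paper handles it.
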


\begin{proof}
It is easy to show that 1) is true for $n=1$.
Suppose now that it is true for some $n>1$.
Let $A$,  $B \in \mathcal{A}_{n+1}$. Then there exists $C$ and $D \in \mathcal{A}_{n}$ and $i$, $j \in \{1,\ldots N\}$ such that $A = f_i(\overline{C \cap X_i})$ and $B = f_j(\overline{D \cap X_j})$.
Suppose that $A \cap B \neq \emptyset$.
Since $f_i(\overline{X}_i) \cap f_j(\overline{X}_j) = \emptyset$ for $i \neq j$, it follows that $i = j$.
Now, since $f_i$ is injective, if $A \cap B \neq \emptyset$ we have $(\overline{C \cap X_i}) \cap (\overline{D \cap X_i}) \neq \emptyset$, and $\overline{C} \cap \overline{D} \neq \emptyset$.
Since $\overline{C} \cap \overline{D} = C \cap D$, using the induction hypothesis we deduce that $C = D$ and it follows that $A =B$.

By the separation property 2) is true for $n=1$. Suppose it is true for some $n > 1$ and suppose $A_{i_1 \ldots i_{n+1}} = A_{j_1 \ldots j_{n+1}}$.
 Then,
 $f_{i_{n+1}}(\overline{A_{i_1 \ldots i_n} \cap X_{i_{n+1}}}) = f_{j_{n+1}}(\overline{A_{j_1 \ldots j_n} \cap X_{j_{n+1}}})$ and $j_{n+1} = i_{n+1}$, since $f_{i_{n+1}}(\overline{X}_{i_{n+1}}) \cap f_{j_{n+1}}(\overline{X}_{j_{n+1}}) \neq \emptyset$ implies that $j_{n+1} = i_{n+1}$.
 On the other hand, if $f_{i_{n+1}}(\overline{A_{i_1 \ldots i_n} \cap X_{i_{n+1}}}) = f_{i_{n+1}}(\overline{A_{j_1 \ldots j_n} \cap X_{i_{n+1}}})$ then by injectiveness $A_{i_1 \ldots i_n} \cap A_{j_1 \ldots j_n} \neq \emptyset$, which implies by 1) that $A_{i_1 \ldots i_n} = A_{j_1 \ldots j_n}$.
 Using the induction hypothesis it follows that $i_k = j_k$ for all $k \leq n$.
\end{proof}

The following Lemma \ref{Itinerary1} and Lemma \ref{Itinerary2} give the relation between the itinerary of a point of $\widetilde{X}$ and the atoms visited by the orbit of that point.

\begin{lemma}\label{Itinerary1}
Let $x \in \widetilde{X}$ and $\theta \in \{1,\dots,N\}^{\mathbb{N}}$ be its itinerary.
Then $f^{t+n}(x) \in A_{\theta_t \theta_{t+1} \ldots \theta_{t+n-1}}$ for every $t \geq 0$ and $n \geq 1$.
\end{lemma}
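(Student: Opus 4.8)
The plan is to prove the statement by induction on $n$, with $t\geq 0$ universally quantified at every stage, relying on nothing more than the definition of the itinerary and the definition of the atoms through the maps $F_i$. Note in particular that this lemma sits among the \lq\lq preliminary general results\rq\rq\ and needs neither the separation property nor the contraction inequality \eqref{CONTRACTION}.

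First I would unfold the definition so that $A_{\theta_t\ldots\theta_{t+n-1}}=F_{\theta_{t+n-1}}\circ\cdots\circ F_{\theta_t}(X)$, which gives the recursion $A_{\theta_t\ldots\theta_{t+n}}=F_{\theta_{t+n}}\bigl(A_{\theta_t\ldots\theta_{t+n-1}}\bigr)=\overline{f\bigl(A_{\theta_t\ldots\theta_{t+n-1}}\cap X_{\theta_{t+n}}\bigr)}$. For the base case $n=1$, observe that $A_{\theta_t}=F_{\theta_t}(X)=\overline{f(X\cap X_{\theta_t})}=\overline{f(X_{\theta_t})}$; since $x\in\widetilde X$ has itinerary $\theta$, the definition of the itinerary gives $f^t(x)\in X_{\theta_t}$, whence $f^{t+1}(x)=f(f^t(x))\in f(X_{\theta_t})\subset A_{\theta_t}$. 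This holds for every $t\geq 0$, which is what the base case requires.

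For the inductive step I would assume that $f^{t+n}(x)\in A_{\theta_t\ldots\theta_{t+n-1}}$ holds for every $t\geq 0$. Fixing $t$, the inductive hypothesis gives $f^{t+n}(x)\in A_{\theta_t\ldots\theta_{t+n-1}}$, while the definition of the itinerary gives $f^{t+n}(x)\in X_{\theta_{t+n}}$; combining the two yields $f^{t+n}(x)\in A_{\theta_t\ldots\theta_{t+n-1}}\cap X_{\theta_{t+n}}$. Applying $f$ and passing to the closure then gives $f^{t+n+1}(x)\in\overline{f\bigl(A_{\theta_t\ldots\theta_{t+n-1}}\cap X_{\theta_{t+n}}\bigr)}=A_{\theta_t\ldots\theta_{t+n}}$, closing the induction. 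Since the orbit point just located lies in this set, the set is in particular non-empty, so it is genuinely an atom in the sense of Definition \ref{Def_Atoms} and the notation $A_{\theta_t\ldots\theta_{t+n}}$ is justified.

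I do not expect a genuine obstacle here: the argument is a direct induction using only the definitions, the trivial inclusion $f(A)\subset\overline{f(A)}$, and the fact that the itinerary records which piece $X_i$ each iterate lies in. The only points requiring care are bookkeeping ones, namely keeping the order of composition of the $F_i$ consistent with the order of the symbols, and quantifying over $t$ uniformly so that the inductive hypothesis is available at the shifted time $t+n$. As an equivalent but cosmetically cleaner alternative, one may first note that $f^t(x)\in\widetilde X$ and that its itinerary is the shift $\theta_t\theta_{t+1}\ldots$, which reduces the whole statement to the case $t=0$ before running the induction on $n$; this amounts to the same computation.
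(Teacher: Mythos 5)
Your proof is correct and follows essentially the same route as the paper: a direct induction on $n$ using only the definitions of the itinerary and of the atoms via the maps $F_i$, with the general-$t$ case handled either by quantifying $t$ throughout the induction or (as the paper does, and as you note in your closing remark) by reducing to $t=0$ via the shifted itinerary of $f^t(x)$. No gaps.
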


\begin{proof}
Let $t = 0$.
By the definitions of atom and itinerary we have that
$f(x) \in f(X_{\theta_0}) \subset A_{\theta_0}$ since $x \in X_{\theta_0}$.
Assume that $f^{n}(x) \in A_{\theta_0 \theta_1 \ldots \theta_{n-1}}$ for some $n >1$.
Then
$f^{n+1}(x) = f(f^{n}(x)) \in f(A_{\theta_0 \theta_1 \ldots \theta_{n-1}} \cap X_{\theta_{n}}) \subset A_{\theta_0 \theta_1 \ldots \theta_{n}}$.
Now suppose $t \neq 0$, let $y = f^t(x)$ and $\omega$ be the itinerary of $y$.
Then $f^{t+n}(x) = f^n(y)  \in A_{\omega_0  \ldots \omega_{n-1}} = A_{\theta_t \theta_{t+1} \ldots \theta_{t+n-1}}$.
\end{proof}

\begin{lemma}\label{Itinerary2}
Suppose that $f$ satisfies the separation property.
 Let $x \in \widetilde{X}$, $t \geq 0$, $n \geq 1$ and $\theta$ be the itinerary of $x$.
If $f^{t+n}(x) \in A_{i_1 i_2 \ldots i_n}$ then $ \theta_t \theta_{t+1} \dots \theta_{t+n-1} = i_1i_2 \ldots i_n$.
\end{lemma}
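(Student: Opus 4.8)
The plan is to derive the statement directly from the two preceding lemmas, exploiting that the single orbit point $f^{t+n}(x)$ is forced to lie in two atoms of generation $n$, which the separation property then makes coincide. The whole argument is a matter of combining Lemma \ref{Itinerary1} with both parts of Lemma \ref{Atom_Unicity}; there is no genuine analytic content beyond this bookkeeping.

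First I would apply Lemma \ref{Itinerary1} to the point $x$ with the given $t$ and $n$. This yields $f^{t+n}(x) \in A_{\theta_t \theta_{t+1} \ldots \theta_{t+n-1}}$, that is, the orbit point already belongs to the atom indexed by the corresponding block of its own itinerary. By hypothesis the same point also belongs to $A_{i_1 i_2 \ldots i_n}$. Hence the two atoms $A_{\theta_t \ldots \theta_{t+n-1}}$ and $A_{i_1 \ldots i_n}$, both of generation $n$, share the point $f^{t+n}(x)$ and therefore have non-empty intersection.

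Next I would invoke the separation property through Lemma \ref{Atom_Unicity}. Part 1) of that lemma states that two atoms of the same generation that intersect must be equal, so $A_{\theta_t \ldots \theta_{t+n-1}} = A_{i_1 \ldots i_n}$. Part 2) then guarantees that an equality of atoms forces an equality of their index words, whence $(\theta_t, \theta_{t+1}, \ldots, \theta_{t+n-1}) = (i_1, i_2, \ldots, i_n)$, which is exactly the claimed identity $\theta_t \theta_{t+1} \dots \theta_{t+n-1} = i_1 i_2 \ldots i_n$.

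The one point worth stressing, rather than a real obstacle, is that the separation property is indispensable at the step invoking Lemma \ref{Atom_Unicity}: without the pairwise disjointness of atoms of a given generation, a single orbit point could belong to several atoms carrying different labels, and the uniqueness of the coding block would fail. Thus the proof is essentially immediate once the earlier lemmas are in hand, and amounts to reading Lemma \ref{Itinerary2} as the converse of Lemma \ref{Itinerary1} made rigorous by the injectivity built into the separation hypothesis.
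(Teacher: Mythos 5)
Your proof is correct and follows essentially the same route as the paper's: apply Lemma \ref{Itinerary1} to place $f^{t+n}(x)$ in the atom labelled by the itinerary block, then use both parts of Lemma \ref{Atom_Unicity} to identify the atoms and their labels. The only cosmetic difference is that the paper first treats $t=0$ and then reduces the general case to it, whereas you invoke Lemma \ref{Itinerary1} directly for arbitrary $t$, which that lemma already permits.
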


\begin{proof}
 Suppose $t = 0$.
 By Lemma \ref{Itinerary1} we have $f^n(x) \in A_{\theta_0 \theta_1 \ldots \theta_{n-1}}$, therefore $A_{i_1 \ldots i_n} \cap A_{\theta_0 \ldots \theta_{n-1}} \neq \emptyset$.
 By Lemma \ref{Atom_Unicity} we have $A_{i_1 \ldots i_n} = A_{\theta_0 \ldots \theta_{n-1}}$ and $\theta_0 \ldots \theta_{n-1} = i_1 \ldots i_n$.
 Now suppose $t \neq 0$, let $y = f^t(x)$ and $\omega$ be the itinerary of $y$.
Then $f^{t+n}(x) = f^n(y)  \in A_{i_1  \ldots i_{n}}$, which implies that $\omega_0\ldots\omega_{n-1}= i_1\ldots i_{n}$, that is $\theta_t \ldots \theta_{t+n-1}=i_1  \ldots i_{n}$.
\end{proof}

\begin{Corollary}\label{Itineraries_Atoms}
Suppose that $f$ satisfies the separation property.
 Let $x \in \widetilde{X}$, $t \geq 0$, $n \geq 1$, $\theta$ be the itinerary of $x$, and $(i_1, i_2, \ldots i_n) \in \{1 \ldots N\}^n$.
 Then
$
  \theta_t \theta_{t+1} \ldots \theta_{t+n-1} = i_1 i_2 \ldots i_n \; \text{ if and only if } \; f^{t+n}(x) \in A_{i_1 \ldots i_n}.
 $
\end{Corollary}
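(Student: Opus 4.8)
The plan is to observe that this corollary is nothing more than the conjunction of the two preceding lemmas, so the proof amounts to reading off each of the two implications from the right lemma; I do not expect to need any new argument. The statement ``$\theta_t \theta_{t+1} \ldots \theta_{t+n-1} = i_1 i_2 \ldots i_n$ if and only if $f^{t+n}(x) \in A_{i_1 \ldots i_n}$'' splits into a forward (``only if'') implication and a converse (``if'') implication, and I would handle them separately.

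First I would dispose of the ``only if'' direction. Assuming $\theta_t \theta_{t+1} \ldots \theta_{t+n-1} = i_1 i_2 \ldots i_n$, the two strings labelling the atoms coincide, so $A_{i_1 \ldots i_n} = A_{\theta_t \ldots \theta_{t+n-1}}$. Lemma \ref{Itinerary1}, applied at time $t$ with word length $n$, gives $f^{t+n}(x) \in A_{\theta_t \ldots \theta_{t+n-1}}$, and substituting the equality of atoms yields $f^{t+n}(x) \in A_{i_1 \ldots i_n}$, as required. I would note that this direction uses only the definitions of atom and itinerary together with the existence of the continuous extensions $f_i$, and does not invoke the separation property.

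For the converse (``if'') direction, assuming $f^{t+n}(x) \in A_{i_1 \ldots i_n}$, Lemma \ref{Itinerary2} applies verbatim at time $t$ and length $n$ and delivers the conclusion $\theta_t \ldots \theta_{t+n-1} = i_1 \ldots i_n$ at once. This is the only place where the separation property is genuinely used, since Lemma \ref{Itinerary2} rests on the pairwise disjointness of the atoms of a given generation established in Lemma \ref{Atom_Unicity}. Because both implications are already available from the lemmas, there is essentially no obstacle here: the corollary records their combined content, namely that for a map satisfying the separation property the finite subword of the itinerary read from position $t$ is completely determined by, and determines, the atom of generation $n$ containing $f^{t+n}(x)$.
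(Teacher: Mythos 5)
Your proposal is correct and follows exactly the paper's route: the paper's proof of this corollary is the one-line observation that it follows directly from Lemmas \ref{Itinerary1} and \ref{Itinerary2}, which is precisely the two-implication reading you give (including the correct remark that only the converse direction needs the separation property).
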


\begin{proof}
 It follows directly from Lemmas \ref{Itinerary1} and \ref{Itinerary2}.
\end{proof}

Let $x \in \widetilde{X}$, $I:=\{1,\dots,N\}$ and $\theta \in I^{\mathbb{N}}$ be the itinerary of $x$. Now, for any $n \geq 1$ and $k \in I$ consider the set
\begin{equation*}
 {L}_n^k(\theta) := \{ i_1 \dots i_n \in L_n(\theta) : \# \{j \in I : \exists t \geq 0 \text{ such that } f^{t+n}(x) \in A_{i_1 \dots i_n} \cap X_j \} = k \}.
\end{equation*}
A word of length $n$ of $\theta$ belongs to  ${L}_n^k(\theta)$ if it is the label of an atom that intersects  at least $k$ continuity pieces and  if the orbit of $f^n(x)$ visits exactly $k$ of these intersections. The following Lemma \ref{complexity_bound2.2} puts in  relation the growth of the complexity function of an itinerary $\theta$ and the cardinality of the sets ${L}_n^k(\theta)$.

\begin{lemma}\label{complexity_bound2.2}
Let  $x \in \widetilde{X}$ and $\theta \in I^{\N}$ be the itinerary of $x$.  Then
\begin{equation}\label{complexity_bound}
 p(\theta,n+1)  \leq  p(\theta,n) + \sum_{k=2}^{N} (k-1) \# {L}_n^k(\theta)\qquad\forall\,n\geq 1.
\end{equation}
If moreover $f$ satisfies the separation property, then (\ref{complexity_bound}) is an equality.
\end{lemma}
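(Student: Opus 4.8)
The plan is to count the words of length $n+1$ by grouping them according to their length-$n$ prefix. Writing $u=i_1\dots i_n$ and $A_u:=A_{i_1\dots i_n}$, every word of $L_{n+1}(\theta)$ has the form $uj$ with $u\in L_n(\theta)$ and $j\in I$, and conversely every $u\in L_n(\theta)$ occurs at some position $t$, so that $\theta_t\dots\theta_{t+n}=u\theta_{t+n}$ extends $u$ to the right at least once (here we use that $\theta\in I^{\mathbb N}$ is infinite). Denoting by $e(u):=\#\{j\in I:\ uj\in L_{n+1}(\theta)\}$ the number of distinct right extensions of $u$, the prefix map $L_{n+1}(\theta)\to L_n(\theta)$ is surjective with fibre over $u$ of cardinality $e(u)$, whence $p(\theta,n+1)=\sum_{u\in L_n(\theta)}e(u)$. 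Since $e(u)\geq 1$ for every $u\in L_n(\theta)$, this rewrites as
\[
p(\theta,n+1)=p(\theta,n)+\sum_{u\in L_n(\theta)}\bigl(e(u)-1\bigr).
\]

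The next step is to compare $e(u)$ with the integer $c(u):=\#\{j\in I:\ \exists\,t\geq 0,\ f^{t+n}(x)\in A_u\cap X_j\}$ appearing in the definition of the sets $L_n^k(\theta)$, so that by construction $u\in L_n^{c(u)}(\theta)$. If $uj\in L_{n+1}(\theta)$, choose $t$ with $\theta_t\dots\theta_{t+n}=uj$; then Lemma~\ref{Itinerary1} gives $f^{t+n}(x)\in A_u$, while $\theta_{t+n}=j$ gives $f^{t+n}(x)\in X_j$, so $j$ is counted by $c(u)$. This shows $e(u)\leq c(u)$ in full generality. Under the separation property, Corollary~\ref{Itineraries_Atoms} supplies the reverse inclusion: from $f^{t+n}(x)\in A_u\cap X_j$ it forces $\theta_t\dots\theta_{t+n-1}=u$ and $\theta_{t+n}=j$, i.e. $uj\in L_{n+1}(\theta)$, so that $e(u)=c(u)$.

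To conclude I would partition $L_n(\theta)=\bigsqcup_{k=1}^{N}L_n^k(\theta)$ according to the value $k=c(u)\in\{1,\dots,N\}$ and sum over the blocks. Using $e(u)-1\leq c(u)-1$ and that the block $k=1$ contributes nothing,
\[
\sum_{u\in L_n(\theta)}\bigl(e(u)-1\bigr)\ \leq\ \sum_{k=1}^{N}(k-1)\,\#L_n^k(\theta)\ =\ \sum_{k=2}^{N}(k-1)\,\#L_n^k(\theta),
\]
which combined with the identity of the first paragraph yields inequality~(\ref{complexity_bound}). When $f$ satisfies the separation property, the equality $e(u)=c(u)$ turns this chain of inequalities into equalities and gives the asserted equality.

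The crux of the argument—and the only place where the separation hypothesis enters—is the identity $e(u)=c(u)$. The inclusion $e(u)\leq c(u)$ is automatic from Lemma~\ref{Itinerary1}, but the reverse can genuinely fail without separation: a point $f^{t+n}(x)\in A_u\cap X_j$ may be visited along an itinerary whose length-$n$ block is not $u$, since $A_u$ need not be disjoint from the other atoms of generation $n$, so $j$ is counted by $c(u)$ although $uj$ never occurs in $\theta$. Controlling this discrepancy is exactly what Corollary~\ref{Itineraries_Atoms}, resting on the disjointness of atoms in Lemma~\ref{Atom_Unicity}, provides, and I expect verifying this equivalence carefully to be the main technical point.
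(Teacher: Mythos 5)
Your proof is correct and follows essentially the same route as the paper: both decompose $L_{n+1}(\theta)$ by length-$n$ prefixes, use Lemma~\ref{Itinerary1} to bound the number of admissible right extensions of $u$ by the number of pieces visited inside $A_u$, and invoke Lemma~\ref{Itinerary2} (via the disjointness of atoms) to upgrade this to an equality under separation. Your per-word counts $e(u)\leq c(u)$ are just a reindexing of the paper's sets $B_n^k(\theta)$ and the identity $\#B_n^k(\theta)=k\,\#L_n^k(\theta)$.
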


\begin{proof} Let $n\geq 1$, and observe that $L_n(\theta) = \bigcup_{k=1}^{N} {L}_n^k(\theta)$. First, we show the inclusion

\begin{equation}\label{INCLUSIONINTESTINAL}
 L_{n+1}(\theta) \subset
  \bigcup_{k=1}^N B^k_n(\theta),
\end{equation}
where
\[
B^k_n(\theta):=\bigcup_{i_1 \dots i_n \in {L}_n^k(\theta)} \left\{ i_1 \dots i_n i_{n+1} \in I^{n+1} : \exists t\geq 0 :  f^{t+n}(x)\in A_{i_1 \dots i_n} \cap X_{i_{n+1}}\right\}.
\]

Let $i_1 \dots i_{n+1} \in L_{n+1}(\theta)$. Then, there exists $t \geq 0$ such that $i_1 \dots i_{n+1} = \theta_t \dots \theta_{t+n}$, which implies $i_1 \dots i_n \in L^k_n(\theta)$ for some $k\in I$ (since
$i_1 \dots i_n \in L_n(\theta)$ and $f^{t+n}(x)\in X_{i_{n+1}}$). On the other hand, by Lemma \ref{Itinerary1}, we have $f^{t+n}(x) \in A_{i_1 \dots i_{n}}$. It follows that $i_1 \dots i_{n+1}\in B_n^k(\theta)$, and thus \eqref{INCLUSIONINTESTINAL} is true.

If we suppose moreover that $f$ satisfies the separation property, we can deduce that \eqref{INCLUSIONINTESTINAL} is an equality. Indeed, if $i_1 \dots i_{n+1}\in\bigcup_{k=1}^N B^k_n(\theta)$, then there exist $k\in I$ and $t\geq 0$ such that $i_1 \dots i_{n}\in L^k_n(\theta)$ and $f^{t+n}(x)\in A_{i_1 \dots i_{n}}\cap X_{i_{n+1}}$. The latter implies that $\theta_{t+n}=i_{n+1}$, and $i_1 \dots i_{n}=\theta_{t}\dots\theta_{t+n-1}$ by Lemma \ref{Itinerary2}. It follows that $i_1 \dots i_{n+1}\in L_{n+1}(\theta)$.

To finish the proof observe that for any $k\in I$ the set $B_n^k(\theta)$ is defined by the union of disjoint sets that satisfy
\[
\#\{ i_1 \dots i_n i_{n+1} \in I^{n+1} : \exists t\geq 0 :  f^{t+n}(x)\in A_{i_1 \dots i_n} \cap X_{i_{n+1}}\}=k\qquad\forall\, i_1 \dots i_n \in {L}_n^k(\theta),
\]
by definition of ${L}_n^k(\theta)$. So we have $\#B_n^k(\theta)=k\#L_n^k(\theta)$. Moreover, since $L_n^k(\theta)\cap L_n^{k'}(\theta)=\emptyset$ if $k\neq k'$, on the one hand  $B_n^k(\theta)\cap B_n^{k'}(\theta)=\emptyset$ if $k\neq k'$,  and on the other hand $p(\theta,n) = \sum_{k=1}^{N} \# {L}_n^k(\theta)$. We deduce that
\[
  \#\bigcup_{k=1}^N B^k_n(\theta)=\sum_{k=1}^N k \, \#{L}_n^k(\theta) = \sum_{k=1}^N \#{L}_n^k(\theta) + \sum_{k=1}^N (k-1) \, \# {L}_n^k(\theta) =  p(\theta,n) + \sum_{k=2}^N (k-1) \, \# {L}_n^k(\theta).
\]
Now, from \eqref{INCLUSIONINTESTINAL} we conclude that \eqref{complexity_bound} is true, and is an equality if $f$ satisfies the separation property.
\end{proof}

\subsection{Discontinuities of piecewise contracting interval maps and complexity}

From now on, we assume that the  phase space $X$ of $f$ is a compact interval of $\R$ and that the contraction pieces are open intervals in $X$. This implies, in particular that the atoms are closed intervals. Also, since the number of pieces is finite, the map $f$ has a finite number of discontinuities. We  label the  $N$ contraction pieces $\{X_i\}_{i=1}^N$ of $f$, in such a way that $X_1<X_2<\dots<X_N$.

\begin{definition} \em
Let $x\in X$ and for any $n\geq 1$ denote $\mathcal{A}_n(x):=\{A\in\mathcal{A}_n :\exists t\in\N : f^{t+n}(x)\in A\}$.
Let $c\in\Delta$ and $i\in\{1,\dots,N-1\}$ be such that $c=\overline{X}_i\cap\overline{X}_{i+1}$. Let $n \geq 1$, we say that $c$ is {\em n-left-right visited} (in short {\em nlr-visited})  by $\{f^k(x)\}_{k\in\N}$  if  there exists $A_n\in\mathcal{A}_n(x)$ such that $c\in A$ and
\[
 \{t \in\N : f^{t+n}(x) \in A_n\cap X_i \} \neq \emptyset  \quad \text{ and } \quad   \{t \in\N : f^{t+n}(x) \in
 A_n\cap X_{i+1} \} \neq \emptyset.
\]
We denote $\Delta^n_{lr}(x)$ the set of the discontinuities that are $n$lr-visited. We say that $c$ is {\em left-right recurrenly visited} (in short {\em lr-recurrently visited}) by $\{f^k(x)\}_{k\in\N}$ if $c\in\Delta^n_{lr}(x)$ for all $n \geq 1$. We denote $\Delta_{lr}(x)$ the set of the discontinuities that are lr-recurrently visited.
\end{definition}

Note that for any $x\in X$ and $n\geq 1$ we have $\Delta_{lr}(x)\subset\Delta^{n+1}_{lr}(x)\subset\Delta^n_{lr}(x)\subset\Delta$, because any atom of generation $n+1$ is contained in an atom of generation $n$.
Also, if $c\in\Delta_{lr}(x)$, then $c$ is an accumulation point (by the left and by the right) of the orbit of $x$, since the diameter of the atoms of a piecewise contracting map goes to $0$ as their generation goes to infinity (see Remark \ref{RKATOMS}).

\begin{lemma}\label{PROCOMPNBNLR} Let $f$ be a piecewise contracting interval map satisfying the separation property. Let $x\in\widetilde{X}$ and $\theta$ be its itinerary. Then,
\begin{equation}\label{COMPNP1}
\#\Delta^n_{lr}(x)\leq p(\theta,n+1)-p(\theta,n)\leq \#\Delta\qquad\forall\, n\geq 1.
\end{equation}
Moreover, if $n_0\geq 1$ is the smallest integer such that $\#A\cap\Delta\leq 1$ for any $A\in\mathcal{A}_n(x)$ with $n\geq n_0$, then
\begin{equation}\label{COMPNBNLR}
p(\theta,n+1)=p(\theta,n)+\#\Delta^n_{lr}(x) \qquad\forall\,n\geq n_0.
\end{equation}
\end{lemma}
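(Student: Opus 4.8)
The plan is to derive both displayed formulas from Lemma~\ref{complexity_bound2.2}, which already gives the exact increment
\[
p(\theta,n+1)-p(\theta,n)=\sum_{k=2}^N (k-1)\,\#L_n^k(\theta)
\]
under the separation property. The whole argument reduces to understanding the sets $L_n^k(\theta)$ in the interval setting, where each atom is a \emph{closed interval}. The key geometric observation is that a word $i_1\dots i_n\in L_n^k(\theta)$ corresponds to an atom $A_{i_1\dots i_n}$ whose interior the orbit enters in $k$ distinct contraction pieces $X_j$; since the $X_j$ are open intervals arranged as $X_1<\dots<X_N$ and separated by the discontinuities in $\Delta$, an interval $A$ meeting $k$ of them must contain at least $k-1$ points of $\Delta$ in the gaps between consecutive pieces it visits. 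So each word contributing weight $(k-1)$ is naturally charged to $k-1$ discontinuities inside its atom.

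For the upper bound $p(\theta,n+1)-p(\theta,n)\leq\#\Delta$, I would bound $\sum_{k=2}^N(k-1)\#L_n^k(\theta)$ by counting, over all atoms of generation $n$ that the orbit visits, the number of discontinuities each contains. The essential point is that distinct atoms of the same generation are pairwise disjoint (Lemma~\ref{Atom_Unicity}, part~1), hence a given discontinuity $c\in\Delta$ lies in at most one atom of $\mathcal{A}_n(x)$. Therefore summing the ``number of discontinuities separating visited pieces'' over all visited atoms counts each element of $\Delta$ at most once, giving the total $\leq\#\Delta$. Making this precise requires verifying that an atom contributing to $L_n^k$ really contains at least $k-1$ distinct discontinuities: because the visited pieces $X_{j_1}<\dots<X_{j_k}$ are separated pairwise, between consecutive visited pieces lies a boundary point of $X$ or a discontinuity, and since the atom is an interval containing points in both extreme pieces it contains the $k-1$ intermediate separating discontinuities. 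This injective charging of weight to $\Delta$ is the crux.

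For the lower bound $\#\Delta_{lr}^n(x)\leq p(\theta,n+1)-p(\theta,n)$, I would exhibit, for each $c\in\Delta_{lr}^n(x)$, a distinct word in some $L_n^k(\theta)$ with $k\geq 2$ contributing to the sum. By definition of $n$lr-visited, there is an atom $A_n\in\mathcal{A}_n(x)$ containing $c$ with the orbit of $f^n(x)$ entering both $A_n\cap X_i$ and $A_n\cap X_{i+1}$; this atom thus meets at least two visited pieces, so its label lies in $L_n^k(\theta)$ for some $k\geq 2$. Again using disjointness of same-generation atoms (Lemma~\ref{Atom_Unicity}), distinct discontinuities $c$ yield distinct atoms and hence distinct labels, and each such label contributes at least $1=k-1\big|_{k=2}$ to the increment; I must check that if a single atom is charged by two discontinuities $c,c'$ then its $k$ is at least $3$, so the weight $k-1$ still dominates the count of discontinuities it absorbs. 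This is the same intervals-are-totally-ordered argument as before, run in the opposite direction.

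Finally, for equality~\eqref{COMPNBNLR} under the hypothesis that $n\geq n_0$ forces $\#(A\cap\Delta)\leq 1$ for every visited atom $A$, the two-sided estimate collapses: each visited atom now contains at most one discontinuity, so it can meet at most two pieces, i.e.\ $L_n^k(\theta)=\emptyset$ for $k\geq 3$ and the increment equals $\#L_n^2(\theta)$. It then remains to identify $\#L_n^2(\theta)$ with $\#\Delta_{lr}^n(x)$: a word lies in $L_n^2(\theta)$ exactly when its atom contains a single discontinuity $c$ and the orbit visits the intersections with both adjacent pieces $X_i,X_{i+1}$, which is precisely the condition that $c$ be $n$lr-visited. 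The disjointness of atoms makes the correspondence $c\mapsto A_n\mapsto\text{label}$ a bijection between $\Delta_{lr}^n(x)$ and $L_n^2(\theta)$. The main obstacle throughout is the careful interval-geometry bookkeeping that converts ``atom meets $k$ pieces'' into ``atom contains exactly/at least the right number of discontinuities,'' and ensuring the charging between words and discontinuities is injective via Lemma~\ref{Atom_Unicity}; once that dictionary is set up, all three inequalities and the final equality follow by summation.
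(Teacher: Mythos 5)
Your proposal is correct and follows essentially the same route as the paper's proof: both reduce the increment to $\sum_{k\geq 2}(k-1)\#L_n^k(\theta)$ via Lemma~\ref{complexity_bound2.2}, obtain the upper bound by noting that a visited atom meeting $k$ pieces contains at least $k-1$ discontinuities and that same-generation atoms are pairwise disjoint, obtain the lower bound by charging each $c\in\Delta_{lr}^n(x)$ to its (unique) visited atom while checking that an atom absorbing $q$ such discontinuities meets at least $q+1$ visited pieces, and collapse everything to the bijection between $L_n^2(\theta)$ and $\Delta_{lr}^n(x)$ when $n\geq n_0$. The subtleties you flag (injective charging via Lemma~\ref{Atom_Unicity}, and the case of one atom containing several lr-visited discontinuities) are exactly the points the paper handles.
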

Note that $n_0$ exists and is bounded above by the smallest $n\geq 1$ such that
\[
\max\limits_{A \in \mathcal{A}_n}\diam(A) < \min\limits_{i\in \{1,\dots,N\}}\diam(X_i),
\]
which in turn can be bounded above by a function of $\lambda$ and  the diameters of the continuity pieces (see Remark \ref{RKATOMS}).

\begin{proof}
Let $n\geq 1$. Suppose that $c\in\Delta_{lr}^n(x)$, then there exists $A \in \mathcal{A}_n(x)$ such that $c\in A$.
Moreover there exist $i\in\{1,\dots,N\}$ and $t\in \mathbb{N}$ such that $f^{t + n}(x) \in A\cap X_i$. Therefore, according to Lemma \ref{Itinerary1}, we have $A \cap A_{\theta_{t} \dots \theta_{t + n-1}}\neq\emptyset$, and after Lemma \ref{Atom_Unicity} we have that $A= A_{\theta_{t} \dots \theta_{t + n-1}}$. As there exists also $t'\in \mathbb{N}$ such that $f^{t' + n}(x) \in A\cap X_{i+1}$, we have that  $\theta_{t} \dots \theta_{t + n-1} \in L_n^k(\theta)$ for some $k\geq 2$. We deduce that
\[
\Delta_{lr}^n(x)\subset\bigcup_{k=2}^N\bigcup_{i_1\dots i_n\in L_n^k(\theta)}\left(A_{i_1\dots i_n}\cap \Delta_{lr}^n(x)\right)
\]
and it follows that
\[
\#\Delta_{lr}^n(x)\leq\sum_{k=2}^N\sum_{i_1\dots i_n\in L_n^k(\theta)}\#\left(A_{i_1\dots i_n}\cap \Delta_{lr}^n(x)\right).
\]
Now, if $A\in\mathcal{A}_n(x)$ and $\#A\cap\Delta^n_{lr}(x)=q$, then $A$ intersects at least $q+1$ continuity pieces that are visited by the orbit of $f^n(x)$. It follows that for any $k\geq 2$ and  $i_1\dots i_n\in L_n^k(\theta)$ we have that $\#\left(A_{i_1\dots i_n}\cap \Delta_{lr}^n(x)\right)\leq k-1$. We deduce that
\begin{equation}\label{BINF}
\#\Delta_{lr}^n(x)\leq\sum_{k=2}^N(k-1)\# L_n^k(\theta)\qquad\forall\, n\geq 1.
\end{equation}

Now, let $n\geq 1$ and $k\geq 2$. If $i_1\dots i_n\in L^k_n(\theta)$, then $A_{i_1\dots i_n}\neq \emptyset$ and
$A_{i_1\dots i_n}$ intersects at least $k$ continuity pieces. As $A_{i_1\dots i_n}$ is a closed interval and the continuity pieces are open intervals, it follows that $A_{i_1\dots i_n}$ contains at least $k-1$ elements of $\Delta$. Now, according to Lemma \ref{Atom_Unicity}, if  $i_1\dots i_n$ and $i'_1\dots i'_n$ are two different words of $L_n(\theta)$ then $A_{i_1\dots i_n}\cap A_{i'_1\dots i'_n}=\emptyset$.
It follows that
\begin{equation}\label{BSUP}
\#\Delta\geq\sum_{k=2}^N(k-1)\# L_n^k(\theta)\qquad\forall\, n\geq 1.
\end{equation}
Then, inequalities \eqref{COMPNP1} follow from \eqref{BINF}, \eqref{BSUP} and Lemma \ref{complexity_bound2.2}.

Let $n\geq n_0$. Then, for any $i_1\dots i_n\in L_n(\theta)$ the atom $A_{i_1\dots i_n}$ intersects at most two continuity pieces, and therefore $L_n^k(\theta)=\emptyset$ for all $k\geq 3$. Moreover,
for any $i_1\dots i_n\in L_n^2(\theta)$ the discontinuity contained in $A_{i_1\dots i_n}$ belongs to
$\Delta_{lr}^n(x)$. We deduce that
\begin{equation}\label{BSUPN0}
\# \Delta_{lr}^n(x)\geq \# L_2^k(\theta)=\sum_{k=2}^N(k-1)\# L_n^k(\theta), \qquad\forall\, n\geq n_0,
\end{equation}
which together with \eqref{BINF} and Lemma \ref{complexity_bound2.2} implies \eqref{COMPNBNLR}.
\end{proof}

\begin{theorem}\label{THCOMPNBLR} Let $f$ be a piecewise contracting map satisfying the separation property. Let $x\in\widetilde{X}$ and $\theta$ be its itinerary, then there exits $m_0\geq 1$ such that
\begin{equation}\label{COMPNBLR}
p(\theta,n)= n\#\Delta_{lr}(x)+ \beta(x) \qquad\forall\,n\geq m_0,
\end{equation}
with
\begin{equation}\label{KBORNES}
p(\theta,1)-\#\Delta_{lr}(x)\leq \beta(x)\leq p(\theta,1)-\#\Delta+ m_0(\#\Delta-\#\Delta_{lr}(x)).
\end{equation}
\end{theorem}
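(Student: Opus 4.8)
The plan is to exploit the two estimates of Lemma \ref{PROCOMPNBNLR} together with the monotonicity of the sets $\Delta^n_{lr}(x)$. The crucial observation is that, since $\Delta$ is finite and $\Delta^{n+1}_{lr}(x)\subset\Delta^n_{lr}(x)\subset\Delta$ for every $n$, the decreasing sequence of finite sets $\{\Delta^n_{lr}(x)\}_{n\geq 1}$ must stabilize. Because $\Delta_{lr}(x)=\bigcap_{n\geq 1}\Delta^n_{lr}(x)$ by definition, its stable value is exactly $\Delta_{lr}(x)$. Hence there is an integer $m_0$, which I would choose at least as large as the integer $n_0$ of Lemma \ref{PROCOMPNBNLR}, such that $\Delta^n_{lr}(x)=\Delta_{lr}(x)$ for all $n\geq m_0$.

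First I would establish the affine formula. For every $n\geq m_0\geq n_0$, equality \eqref{COMPNBNLR} applies and gives $p(\theta,n+1)-p(\theta,n)=\#\Delta^n_{lr}(x)=\#\Delta_{lr}(x)$, a constant increment. Summing this constant increment from $m_0$ to $n-1$ yields $p(\theta,n)=p(\theta,m_0)+(n-m_0)\#\Delta_{lr}(x)$ for all $n\geq m_0$, which is precisely \eqref{COMPNBLR} with $\beta(x):=p(\theta,m_0)-m_0\#\Delta_{lr}(x)$.

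It then remains to bound this value of $\beta(x)$. I would write $p(\theta,m_0)=p(\theta,1)+\sum_{n=1}^{m_0-1}\bigl(p(\theta,n+1)-p(\theta,n)\bigr)$ and insert the two-sided estimate \eqref{COMPNP1}. For the upper bound I use $p(\theta,n+1)-p(\theta,n)\leq\#\Delta$ on each of the $m_0-1$ increments, obtaining $p(\theta,m_0)\leq p(\theta,1)+(m_0-1)\#\Delta$; subtracting $m_0\#\Delta_{lr}(x)$ and rearranging gives exactly the claimed upper bound $p(\theta,1)-\#\Delta+m_0(\#\Delta-\#\Delta_{lr}(x))$. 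For the lower bound I use $p(\theta,n+1)-p(\theta,n)\geq\#\Delta^n_{lr}(x)\geq\#\Delta_{lr}(x)$, the last inequality because $\Delta_{lr}(x)\subset\Delta^n_{lr}(x)$; this gives $p(\theta,m_0)\geq p(\theta,1)+(m_0-1)\#\Delta_{lr}(x)$, and subtracting $m_0\#\Delta_{lr}(x)$ yields $\beta(x)\geq p(\theta,1)-\#\Delta_{lr}(x)$.

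As for the main difficulty: the argument is essentially bookkeeping once the stabilization of $\Delta^n_{lr}(x)$ is in hand, so the only delicate point is ensuring that a single $m_0$ can simultaneously serve as the stabilization index and satisfy $m_0\geq n_0$; this is harmless since enlarging $m_0$ preserves both the validity of \eqref{COMPNBNLR} and the stabilization. A secondary point to check is that the telescoping bounds are applied on the correct range $1\leq n\leq m_0-1$, where only the general inequalities \eqref{COMPNP1} (and not the equality \eqref{COMPNBNLR}) are available; this is exactly why the upper bound involves $\#\Delta$ rather than $\#\Delta_{lr}(x)$.
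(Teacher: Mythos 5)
Your proposal is correct and follows essentially the same route as the paper's proof: stabilization of the decreasing finite sets $\Delta^n_{lr}(x)$ at $\Delta_{lr}(x)$, the choice $m_0\geq n_0$ so that \eqref{COMPNBNLR} gives a constant increment, the identification $\beta(x)=p(\theta,m_0)-m_0\#\Delta_{lr}(x)$, and the telescoping of \eqref{COMPNP1} up to $m_0$ for the two bounds. No gaps.
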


\begin{proof}
For any $c\in\Delta$, either $c\in\Delta_{lr}(x)$ or there exists $\nu(c ):=\min\{n\geq 1: c\notin\Delta_{lr}^n(x)\}$. As $\Delta^{n+1}_{lr}(x)\subset\Delta^n_{lr}(x)$ for all $n\geq 1$, it follows that for any $c\in\Delta\setminus\Delta_{lr}(x)$ we have that $c\notin\Delta_{lr}^n(x)$ for all $n\geq \nu( c)$. Therefore, if $n_1=\max\{\nu(c ), c\in\Delta\setminus\Delta_{lr}(x)\}$ if $\Delta\neq\Delta_{lr}(x)$, and $n_1=1$ otherwise, then $\Delta_{lr}^n(x)=\Delta_{lr}(x)$ for all $n\geq n_1$.

Let $m_0:=\max\{n_0,n_1\}$. Then we can write \eqref{COMPNBNLR} as
\[
p(\theta,n+1)-p(\theta,n)=\#\Delta_{lr}(x) \qquad\forall\,n\geq m_0,
\]
which implies $p(\theta,n)= p(\theta,m_0) + (n-m_0)\#\Delta_{lr}(x)$ for all $n\geq m_0$. It follows that \eqref{COMPNBLR} is true with
\begin{equation}\label{KM0}
\beta(x)=p(\theta,m_0) -m_0\#\Delta_{lr}(x).
\end{equation}
Recalling that $\#\Delta^n_{lr}(x)\geq\#\Delta_{lr}(x)$ for all $n\geq 1$, from \eqref{COMPNP1} we obtain that
\[
p(\theta,1)-\#\Delta_{lr}(x)\leq p(\theta,n)-n\#\Delta_{lr}(x)\leq p(\theta,1)-\#\Delta+n(\#\Delta-\#\Delta_{lr}(x))\qquad\forall\, n\geq 1,
\]
and setting $n=m_0$, we obtain \eqref{KBORNES} from \eqref{KM0}.
\end{proof}

 \begin{proof}[Proof of part 1) of Theorem \ref{TH}.]
For any $x\in\widetilde{X}$ with itinerary $\theta$ we have
\begin{equation}\label{PTHETA1}
p(\theta,1)-\#\Delta\leq 1\leq p(\theta,1)-\#\Delta_{lr}(x),
\end{equation}
which implies, in particular, that $1\leq \beta(x)$. Together with Theorem \ref{THCOMPNBLR}, this proves equality \eqref{COMPTH} of Theorem \ref{TH}. In fact, equality \eqref{COMPTH} follows from equality \eqref{COMPNBLR} with $\alpha=\#\Delta_{lr}(x)\in\{0,1,\dots,N-1\}$. Also, if $\alpha=N-1$, then $\#\Delta_{lr}(x)=\#\Delta$. So, from \eqref{PTHETA1} and \eqref{COMPNBLR}, we conclude that $\beta(x)=1$.
 \end{proof}

\begin{remark}\em Now we give some direct consequences of Theorem \ref{THCOMPNBLR} and we comment their relations with other results.  From Theorem \ref{THCOMPNBLR} it follows that:

\noindent{\bf 1. } There exists an itinerary with a  complexity function which  is not eventually constant if and only if there exists a discontinuity point $c $ which is lr-recurrently visited by an orbit of $\widetilde{X}$.  In particular, if the limit set of $f$ does not contain any discontinuity point, then the complexity function of every itinerary is eventually constant (recall that if $c\in\Delta_{lr}(x)$, then $c$ belongs to the $\omega$-limit set of $x$).  In this case, for any $x\in\widetilde{X}$ with itinerary $\theta$ we have
\[
p(\theta,1)\leq p(\theta,n)\leq p(\theta,1) + (m_0-1)\#\Delta\qquad\forall\, n\geq 1,
\]
and $p(\theta,n)=\beta(x)$ is constant for any $n\geq m_0$. Moreover, when the limit set of $f$ does not contain discontinuity points, there exists a smallest integer $m\geq 1$ such that  no atom of generation $m$ contains   discontinuities. This integer $m$ is an upper bound for $m_0$, which  provides a uniform upper bound on $\beta(x)$ through inequalities \eqref{KBORNES}.

\noindent{\bf 2. } If $\#\Delta_{lr}(x) = 1$ but $\Delta_{lr}(x) \neq \Delta$, then the itinerary $\theta$ of the orbit of $x$ satisfies $p(\theta,n)=n+\beta(x)$ for all $n$ large enough, where $\beta(x)$
may be larger than 1.
An example of a piecewise contracting map whose itineraries have such a complexity can be found in \cite{CFLM06}.
In  \cite{D99}, it is shown that, up to a prefix of finite length, a sequence of complexity $n+\beta$ is the image by a morphism of a Sturmian sequence. We conclude that, if $\#\Delta_{lr}(x) = 1$ but $\Delta_{lr}(x) \neq \Delta$, the itinerary of any orbit, is Sturmian up to a morphism. Hence, up to a morphism, it is the itinerary of an irrational rotation, with respect to a suitable partition of the circle.

\noindent{\bf 3. } If $\#\Delta_{lr}(x) > 1$, the itinerary $\theta$ may be that of an irrational rotation: in fact, for  some  adequate values of $\alpha $  and $\beta$, the itineraries of an irrational rotation with respect to a suitable partition of the circle may have a complexity function of the form $\alpha n + \beta$ for all $n$ large enough. However, not every sequence with such a complexity  is itself an itinerary of an  irrational rotation \cite{AB98}.

\noindent{\bf 4. } If all the discontinuities are lr-recurrently visited by the orbit of a point $x\in\widetilde{X}$, i.e. $\Delta_{lr}(x)=\Delta$, then $m_0=n_0$ (see the definition of $m_0$ in the proof of Theorem \ref{THCOMPNBLR}). Besides, from part 1 of Theorem \ref{TH}, we know that $\beta(x)=1$ in this case. So, equality \eqref{COMPNBLR} becomes
\begin{equation}\label{COMPMAX}
p(\theta,n)=(N-1)n+1\qquad\forall n\geq n_0.
\end{equation}
In the particular case where  the map has  two contraction pieces ($N=2$)  and the (unique) discontinuity point  is  lr-recurrently visited by the orbit of $x$, then $n_0=1$ and $\theta(x)$ is a Sturmian sequence.  Therefore, it is  also  an itinerary  of an irrational rotation. In general, if the itinerary $\theta (x)$ satisfies \eqref{COMPMAX} for some $N\geq 2$, then it has the complexity of an itinerary of a $N$-interval exchange transformation satisfying the so-called Keane's infinite distinct orbit condition \cite{F13,FZ08}.
In fact  it is proved in \cite{P16} the following result:   if a piecewise contracting  map $f$ has no periodic orbit,
and is such that the image of any discontinuity and each lateral limit of $f$  belong to $\widetilde{X}$,
then it is semi-conjugate to an interval exchange transformation. It is however not easy to exhibit examples satisfying these hypotheses, since generically a piecewise contracting interval map has periodic points.
In the next section, we will construct such examples for every $N\geq 2$, where furthermore equality  \eqref{COMPMAX} holds for any itinerary $\theta$.
\end{remark}

\section{Existence of piecewise contracting interval maps of full complexity}\label{SFULL}

In the previous section we proved Theorem \ref{THCOMPNBLR}, which implies that the complexity of the itinerary of any orbit of a piecewise contracting interval map satisfying the separation property is bounded from above by an affine function whose slope is equal to the number of discontinuities of the map. However, as far as we know, there is still no example of piecewise contracting interval maps with more than one lr-recurrently visited discontinuity.
The purpose of this section is to construct such examples. Even more, we will construct examples for which all the discontinuities are lr-recurrently visited
by all the orbits.
These maps generate itineraries with the maximal  complexity  for the fixed number $N$ of contracting pieces. We say that they have ``full" complexity.

We are also interested in the asymptotic dynamics of such  examples. It takes place in what we call the {\it attractor} $\Lambda$ of the piecewise contracting map $f:X\to X$. To define the attractor we first recall the  definition of the atoms $A \in {\mathcal A}_n$ of generation $n$   for any natural number $n \geq 1$ (see Definition \ref{Def_Atoms}). We define the attractor $\Lambda \subset X$ as follows:
\begin{equation}\label{ATTRACTOR}
\Lambda:=\bigcap_{n=1}^\infty\Lambda_n\quad\text{where}\quad\Lambda_n:=\bigcup_{A\in\mathcal{A}_n}A
\quad\forall\, n\geq 1.
\end{equation}
The sets $\Lambda_n$ can equivalently  be   recursively defined by $\Lambda_1:=\overline{f(X\setminus\Delta)}$
and $\Lambda_{n+1}:=\overline{f(\Lambda_{n}\setminus\Delta)}$ for all $n\geq 1$.

Note that the attractor $\Lambda$ is  nonempty
and compact. Besides, $\Lambda$ contains all the non-wandering and $\omega$-limit points; see \cite{CGMU16} for more details
and examples.

Precisely, in this section we prove the following theorem:

\begin{theorem}\label{TH3}  For every $N\geq 2$, there exists a piecewise affine contracting map which has $N$ contraction pieces and satisfies the separation property, whose  attractor is a minimal Cantor set, and
such that each of its discontinuities is lr-recurrently visited by any orbit.
\end{theorem}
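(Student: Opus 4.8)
The plan is to reduce Theorem \ref{TH3} to the construction, by induction on $N$, of piecewise affine contracting maps of full complexity, and then to transfer the structural properties of the attractor from the base case. First observe that, by Theorem \ref{THCOMPNBLR} together with part~1 of Theorem \ref{TH} and item~4 of the Remark following Theorem \ref{THCOMPNBLR}, a map with $N$ contraction pieces satisfies $p(\theta,n)=(N-1)n+1$ for \emph{every} $n\ge 1$ and every itinerary $\theta$ exactly when $\Delta_{lr}(x)=\Delta$ for every $x\in\widetilde X$ and $n_0=1$; that is, full complexity is equivalent to having all $N-1$ discontinuities lr-recurrently visited by every orbit. Hence it suffices to exhibit, for each $N\ge 2$, a piecewise affine contraction of full complexity whose attractor $\Lambda$ (defined in \eqref{ATTRACTOR}) is a minimal Cantor set, the lr-recurrence of all discontinuities being automatic from the complexity value.

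For the base case $N=2$ I would take one of the known piecewise affine contractions on two intervals whose itineraries are Sturmian, for instance a contracted rotation with irrational rotation number. Such a map has a single discontinuity, its attractor is a minimal Cantor set, the discontinuity is lr-recurrently visited by every orbit, and $p(\theta,n)=n+1$ for all $n\ge1$, so all the required properties hold for $N=2$.

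For the inductive step, assume $f_N$ has been built on an interval $X$ with $N$ pieces, minimal Cantor attractor $\Lambda_N$, and all discontinuities lr-recurrently visited. I would choose a point $c$ in the interior of some piece $X_{i_0}$ which lies in $\Lambda_N$, is a two-sided accumulation point of $\Lambda_N$, and has an infinite aperiodic forward orbit disjoint from $\Delta$; minimality and the absence of periodic orbits guarantee such a $c$ exists. The map $f_{N+1}$ is then obtained by splitting $X_{i_0}$ at $c$ into two pieces and redefining $f$ along the orbit of $c$ so as to open a genuine affine jump at $c$ while preserving, up to an almost one-to-one correspondence, the images of all other points. Concretely this can be done by a Denjoy-type surgery, inserting a controlled family of wandering gaps along the orbit of $c$, keeping the two new branches affine with the same contraction rate and choosing a small, generic jump size. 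One must check that the separation property is preserved, i.e.\ that the image of each new branch is disjoint from all the others, and that no periodic orbit is created. Granting this, $c$ becomes a two-sided accumulation point of the new attractor, so $f_{N+1}$ has $N$ discontinuities all lr-recurrently visited, whence $p(\theta,n)=Nn+1$ for all $n\ge 1$ by Theorem \ref{THCOMPNBLR}.

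It remains to transfer minimality and the Cantor structure from $f_N$ to $f_{N+1}$. The surgery provides a continuous factor map $\pi:\Lambda_{N+1}\to\Lambda_N$ intertwining the two dynamics and injective off the countable orbit of $c$. Since a blow-up along a single orbit of a minimal system is again minimal, $(\Lambda_{N+1},f_{N+1})$ is minimal; and $\Lambda_{N+1}$ is compact, totally disconnected and perfect, hence a Cantor set, because these properties are inherited through $\pi$ from $\Lambda_N$ and the inserted gaps accumulate only on $\Lambda_{N+1}$. I expect the main obstacle to be the inductive step itself: designing the affine surgery so that it simultaneously creates exactly one new lr-recurrently visited discontinuity, preserves the separation property, and introduces no periodic orbit, all while keeping only finitely many contraction pieces. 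Controlling the absence of periodic orbits and the genericity needed to keep the new discontinuity two-sided accumulated is the delicate point; by contrast, once the construction is in place, the complexity value and the transfer of minimality and of the Cantor structure follow directly from Theorem \ref{THCOMPNBLR} and from the factor map $\pi$.
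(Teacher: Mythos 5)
Your overall strategy is the one the paper follows: induction on $N$, with a contracted rotation $x\mapsto\lambda x+\mu \bmod 1$ of irrational rotation number as the base case, and a Denjoy-type blow-up along a well-chosen orbit of the $N$-piece map to produce the $(N+1)$-piece map. However, the inductive step -- which you yourself flag as the main obstacle -- is precisely where the content of the proof lies, and your sketch leaves it essentially unconstructed. The crucial quantitative point you miss is that the gap inserted at the $r$-th point $\xi_r$ of the cutting orbit cannot have a ``small, generic'' length: it must have length exactly $\lambda^r$, i.e.\ the lengths must form a geometric sequence with ratio equal to the contraction constant. The new map on the image of the old phase space is $\phi\circ f\circ\phi^{-1}$, where $\phi$ is the monotone map that opens the gaps, and a direct computation (Case 1 of the proof of {\bf P1} in Proposition \ref{PROPDELAMORT}) shows this is affine with slope $\lambda$ only because each gap is mapped onto the next with $|G_{r+1}|=\lambda|G_r|$; with any other choice of lengths the surgered map fails to be piecewise affine with finitely many pieces. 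Likewise, your conditions on the cut point $c$ (in the attractor, two-sided accumulation, aperiodic orbit avoiding $\Delta$) are not sufficient: the paper's ``well-cutting'' condition additionally requires the orbit of $c$ to avoid the boundaries of the gaps of $\Lambda_N$, the orbits of the interval endpoints, and the orbits of all one-sided images $f_i(c_i)$, $f_{i+1}(c_i)$. These exclusions are what guarantee that $\phi$ is continuous at every point where continuity is invoked, that the images of the new map's discontinuities lie in $\widetilde X$ (property {\bf P4}), and that the separation property survives the surgery.

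Two further points need attention. First, the theorem asserts lr-recurrent visitation by \emph{any} orbit, including orbits of points outside $\widetilde X$; your reduction via the complexity function only covers $x\in\widetilde X$. The paper handles the remaining points by defining $f$ on $\Delta$ so that $f(\Delta)\subset\widetilde X$ (possible by {\bf P4}) and using $\Delta_{lr}(f^p(x))\subset\Delta_{lr}(x)$. Second, your appeal to ``a blow-up along a single orbit of a minimal system is again minimal'' via a factor map $\pi$ is too soft here: $f_{N+1}$ restricted to its attractor is \emph{not} continuous, so the standard topological-dynamics argument does not apply directly. The paper instead proves density of every orbit by hand (Lemma \ref{DENSITY}), using the lr-recurrent visitation of a discontinuity together with the density of the forward orbit of one of its lateral images ({\bf P5}); this is why properties {\bf P4}--{\bf P6} are carried through the induction rather than deduced afterwards. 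In short, the route is right, but the proposal defers exactly the steps that constitute the proof.
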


\noindent Theorem \ref{TH3}, together with \eqref{COMPMAX}, proves immediately equation \eqref{COMP2TH} of Theorem \ref{TH} for any $n\geq n_0$. Later, we will prove that it is always possible to construct the maps in such a way that  $n_0= 1$ (see Lemma \ref{N01}), to end the proof of part 2) of Theorem \ref{TH}.

 Observe that  the attractor of the piecewise contracting map  of Theorem \ref{TH3} contains {\it all} the discontinuities of the map. In fact any lr-recurrently visited discontinuity belongs to the $\omega$-limit set of some orbit, and the attractor contains all the $\omega$-limit sets.

To prove Theorem \ref{TH3}, we will prove  the following stronger statement:

\noindent {\bf Assertion A:} For every $N\geq 2$, there exists $N$ ordered disjoint open intervals $X_1=[c_0,c_1),X_{2}=(c_1,c_2),\dots,X_{N}=(c_{N-1},c_{N}]$ of  $X:=[c_0,c_{N}]$ and $f:X\to X$ with all the following properties:

\noindent {\bf P1)} The map $f$ is  piecewise contracting with contraction pieces $X_1,\dots,
X_{N}$ and $f|_{X_{i}}$ is affine with slope $\lambda\in(0,1)$.

\noindent {\bf P2)} The map $f$ satisfies the separation property.

\noindent {\bf P3)} The attractor $\Lambda$ of $f$ is a Cantor set.

\noindent {\bf P4)} The set $\bigcup_{i=1}^{N-1}\{f_{i}(c_i),f_{i+1}(c_i)\}$ is a subset of $\widetilde{X}$.

\noindent {\bf P5)} There exists $i\in\{1,\dots,{N-1}\}$ such that $\{f^n(f_{i}(c_i))\}_{n\in\N}$ or $\{f^n(f_{i+1}(c_i))\}_{n\in\N}$ is dense in $\Lambda$.

\noindent {\bf P6)} For any $x\in\widetilde{X}$ and $i\in\{1,\dots,{N-1}\}$ we have that $c_i\in\Delta_{lr}(x)$.

\medskip

To prove Assertion A and Theorem \ref{TH3}, we will need the following lemma.

\begin{lemma}\label{DENSITY} Let ${N}\geq 2$ and $c_0<c_{1}<\dots<c_{N}$ in $\R$. Let $f:[c_0,c_{N}]\to[c_0,c_{N}]$ be a piecewise contracting map  with contraction pieces $X_1=[c_{0},c_1), X_2=(c_{1},c_2),\dots, X_{N}=(c_{{N-1}},c_{N}]$ and which satisfies the separation property.  Suppose that there exists $i\in\{1,\dots,{N-1}\}$
such that

\noindent 1)  $f_j(c_i)\in\widetilde{X}$ and $\{f^n(f_{j}(c_i))\}_{n\in\N}$ is dense in $\Lambda$ for some $j\in\{i,i+1\}$.

\noindent 2) $c_i\in\Delta_{lr}(x_0)$ for some $x_0\in\widetilde{X}$.

\noindent Then, for any $\epsilon>0$ and $y\in\Lambda$ such that $\Lambda\cap (y,y+\nu)\neq\emptyset$  (resp. $\Lambda\cap (y-\nu,y)\neq\emptyset$) for all $\nu>0$,  there exists $l \geq 0$ such that $f^l(x_0)\in(y,y+\epsilon)$ (resp. $f^l(x_0)\in(y-\epsilon,y)$).
\end{lemma}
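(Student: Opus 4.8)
The plan is to reduce the statement to a shadowing argument. Set $z:=f_j(c_i)$. By hypothesis~1) the orbit of $z$ is dense in $\Lambda$, so it visits any prescribed part of $\Lambda$; by hypothesis~2) the orbit of $x_0$ accumulates on $z$, so a suitable forward iterate of $x_0$ will track a finite piece of the orbit of $z$ and thereby inherit its visit to the chosen side of $y$. First I would establish that the orbit of $x_0$ accumulates on $z$. Since $c_i\in\Delta_{lr}(x_0)$ and the diameters of the atoms tend to $0$ (Remark~\ref{RKATOMS}), the orbit of $x_0$ accumulates on $c_i$ both from the left (inside $X_i$) and from the right (inside $X_{i+1}$). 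If $j=i$ I take a sequence $f^{m_k}(x_0)\in X_i$ with $f^{m_k}(x_0)\to c_i$; continuity of the extension $f_i$ on $\overline{X}_i$ gives $f^{m_k+1}(x_0)=f_i(f^{m_k}(x_0))\to f_i(c_i)=z$. If $j=i+1$ the same argument with the right-approaching sequence and $f_{i+1}$ applies. In either case the orbit of $x_0$ comes arbitrarily close to $z$.

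I treat the right case (the hypothesis $\Lambda\cap(y,y+\nu)\neq\emptyset$ for all $\nu>0$). Since $\Lambda\cap(y,y+\epsilon/4)\neq\emptyset$, I fix a point $w$ in it and set $\eta:=(w-y)/2>0$. As the orbit of $z$ is dense in $\Lambda$ and $w\in\Lambda$, there is $n\geq 0$ with $|f^n(z)-w|<\min(\eta,\epsilon/4)$, which forces $f^n(z)\in(y+\eta,\,y+\epsilon/2)$; thus $f^n(z)$ lies strictly to the right of $y$ with a definite gap $\eta$.

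The heart of the argument is to shadow the finite segment $z,f(z),\dots,f^n(z)$ by the orbit of $x_0$. Because $z\in\widetilde{X}$, none of $f^0(z),\dots,f^n(z)$ lies in $\Delta$, so $d:=\min_{0\le k\le n}\dist(f^k(z),\Delta)>0$. I pick $\rho<\min(d,\eta,\epsilon/2)$ and, using the accumulation established above, choose $s\geq 0$ with $|f^s(x_0)-z|<\rho$. An induction on $k$ shows that $f^{s+k}(x_0)$ and $f^k(z)$ lie in the same contraction piece and $|f^{s+k}(x_0)-f^k(z)|\le\lambda^k\rho$ for $0\le k\le n$: if this holds at step $k$, then the bound $\lambda^k\rho\le\rho<d\le\dist(f^k(z),\Delta)$ places both points in the same piece $X_{m_k}$, and applying the contraction $f_{m_k}$ yields $|f^{s+k+1}(x_0)-f^{k+1}(z)|\le\lambda^{k+1}\rho<d$, so the next pair again shares a piece. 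Taking $k=n$ gives $|f^{s+n}(x_0)-f^n(z)|\le\lambda^n\rho<\min(\eta,\epsilon/2)$, whence $f^{s+n}(x_0)>f^n(z)-\eta>y$ and $f^{s+n}(x_0)<f^n(z)+\epsilon/2<y+\epsilon$. Hence $l:=s+n$ satisfies $f^l(x_0)\in(y,y+\epsilon)$. The left case is identical after replacing every interval $(y,y+\cdot)$ by $(y-\cdot,y)$ and choosing $w\in\Lambda\cap(y-\epsilon/4,y)$.

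The main obstacle is precisely this shadowing step, since nearby orbits of a piecewise contracting map may be torn apart at a discontinuity. The key point that removes it is the combination of $z\in\widetilde{X}$, which keeps the reference segment $f^0(z),\dots,f^n(z)$ at a uniform positive distance $d$ from $\Delta$, with the contraction property, which keeps the tracking error $\lambda^k\rho$ below $d$; together these prevent any discontinuity from separating the two orbits over the finite horizon $n$, so the terminal visit of $f^n(z)$ to the right of $y$ is transmitted to $f^{s+n}(x_0)$.
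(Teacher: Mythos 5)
Your proof is correct and follows essentially the same route as the paper's: density of the orbit of $z=f_j(c_i)$ places some $f^n(z)$ in the target interval with a definite margin, lr-recurrence of $c_i$ sends an iterate of $x_0$ to $c_i$ from the side of $X_j$ (hence, after one more step, arbitrarily close to $z$), and finite-time continuity along the segment $z,\dots,f^n(z)$ transfers the visit. The only (harmless) difference is in how that last step is justified: you use an explicit $\lambda^k\rho$ shadowing estimate against $d=\min_{0\le k\le n}\dist(f^k(z),\Delta)>0$, whereas the paper invokes the finiteness of $\bigcup_{l=0}^{n-1}f^{-l}(\Delta)$ to obtain a neighbourhood of $f_j(c_i)$ on which $f^n$ is continuous.
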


\begin{proof}  We will make the proof for $y\in\Lambda$ such that $\Lambda\cap (y,y+\nu)\neq\emptyset$ for all $\nu>0$, and without loss of generality we will suppose that $i=j=1$. Let $\epsilon>0$,  $z\in \Lambda_f\cap (y,y+\epsilon)$ and $\delta=\frac{1}{2}\min\{z-y,y+\epsilon-z\}$. Since $\{f^n(f_1(c_1))\}_{n\in\N}$ is dense in $\Lambda_f$, there exists $n$ such that $|f^n(f_1(c_1))-z|<\delta$. By injectivity of  $f$ on $X\setminus \Delta$ the set $P:=\cup_{l=0}^{n-1}f^{-l}(\Delta)$ is finite, and for $\rho:=d(f_1(c_1),P)>0$
the map $f^n$ is continuous on $(f_1(c_1)-\rho, f_1(c_1)+\rho)$.  Using the continuity of $f_1$ on $[c_0,c_1]$, we obtain that there exists $\delta'>0$ such that $|f^n(f_1(c_1))-f^{n+1}(x)|<\delta$ for all $x\in(c_1-\delta',c_1)$.  As $c_1\in\Delta_{lr}(x_0)$, there exists  $m$ such that $f^m(x_0)\in (c_1-\delta',c_1)$ and by the triangular inequality we deduce that  $|f^l(x_0)-z|<2\delta$ for $l=m+n+1$,  that is $f^l(x_0)\in(y,y+\epsilon)$.
\end{proof}

Before proving Assertion A, we show that together with Lemma \ref{DENSITY} it implies Theorem \ref{TH3}:

\begin{proof}[Proof of Theorem \ref{TH3} as a Corollary of Assertion A] Suppose  that Assertion A is true and let $f$ satisfying {\bf P1-6}\footnote{{\bf P1-6} is a shorthand notation for ``the properties {\bf P1} to  {\bf P6}".} for some $N\geq 2$.  Then, $f$ is a piecewise affine contracting interval map, it has the separation property, its attractor is a Cantor set and $\Delta=\Delta_{lr}(x)$ for any $x\in\widetilde{X}$. So, to prove Theorem \ref{TH3} it remains to prove that
$\Lambda$ is minimal and that  $\Delta=\Delta_{lr}(x)$ for any $x\in X\setminus\widetilde{X}$. To this end, note that {\bf P1-6} do not impose any condition on the definition of $f$ on $\Delta$, and therefore $f$ can be suitably defined  on this set. So, we can assume that $f(c_i)\in\{f_{i-1}(c_i),f_i(c_i)\}$ for any $i\in\{1,\dots,N-1\}$, which implies by  {\bf P4} that $f(\Delta)\subset\widetilde{X}$. Since $f$ satisfies {\bf P1-6}, it satisfies the hypotheses of the Lemma \ref{DENSITY}. It follows that  the orbit of any point $x_0\in\Lambda\cap\widetilde{X}$ is dense in $\Lambda$. Now, since $f( c)\in\widetilde{X}$ for all $c\in\Delta$, the orbit of a point  in $\Lambda\setminus\widetilde{X}$ is also dense in $\Lambda$.  We deduce  that $\Lambda$ is minimal. Finally,  {\bf P4}, {\bf P6} and $f(\Delta)\subset\widetilde{X}$ imply that for any $x\in X\setminus\widetilde{X}$ and $c\in\Delta$ there exists $p\geq 1$ such that  $c\in\Delta_{lr}(f^p(x))$.  Since $\Delta_{lr}(f^p(x))\subset \Delta_{lr}(x)$, we conclude that any discontinuity of $f$ is lr-recurrently visited by any orbit, ending the proof of Theorem \ref{TH3}. \end{proof}

In the following subsections  we will prove Assertion A. Let us describe briefly the route of the proof:

The proof goes by induction on the number $N \geq 2$ of contraction pieces of $f$.  In Subsection \ref{CIR1}, relying on a known example, we prove that there exists a  map  satisfying {\bf P1-6} for ${N}=2$. In Subsection \ref{CIRK}, we construct a map $g$ satisfying {\bf P1-6} with ${N+1}$ contraction pieces, assuming the existence of a map $f$ satisfying {\bf P1-6}  with $N$ contraction pieces  and with contracting  constant $\lambda \in (0,1)$. To construct $g$ from $f$, we first choose an adequate point $\xi_0 \in \widetilde X$, and its orbit $\{ \xi_r\}_{r \in\N}$, where $\xi_r = f^r(\xi_0)$. Second, we \lq\lq cut\rq\rq \ the interval $X$ at each point $\xi_r$ with $r \geq 1$ (but not at $\xi_0$), and insert an interval $G_r$ substituting the point $\xi_r$, such that, for all $r \geq 1$ the length of $G_r$ is $\lambda^r$. We   define an  affine map $g|_{G_r} : G_r \to G_{r+1}$ for all $r \geq 1$. In this way, we have added a new discontinuity point of $g$  at the point,  say $\xi_0$. For all $y \not \in \bigcup_{r \geq 1} G_r$ we define the image $g(y)$ from the image $f(x)$ of the corresponding  point $x \in X \setminus \{\xi_r\}_{r \geq 1}$. In particular $g$ preserves the old $N-1$ discontinuity points of $f$. So  $g$ has $N$ discontinuity points, hence $N+1$ continuity pieces. Finally, in Proposition \ref{PROPDELAMORT}  we show that there exists a good choice of the \em cutting orbit \em $\{\xi_r\}_{r \in \mathbb N}$, to make $g$ inherit   the properties {\bf P1-6} from $f$.

\subsection{Full complexity with a single  discontinuity}\label{CIR1}

In this subsection we prove that Assertion A holds for ${N}=2$. We begin with a lemma about the sets $\Lambda_n$ (defined in \eqref{ATTRACTOR}) for  piecewise increasing maps with two contraction pieces.

\begin{lemma}\label{CR_gaps} Let $X:=[0,1]$, $c\in(0,1)$ and $f:X\to X$ be a piecewise contracting map with contraction pieces $X_1=[0,c)$ and $X_2=(c,1]$. Suppose that the continuous extensions $f_1$ and $f_2$ of $f$
are increasing and such that $0=f_2(c )<f_2(1 )<f_1(0 )<f_1(c )=1$.

For every $k \in \mathbb{N}$, let $H_k := f^k((f(1),f(0)))$. If $c \notin \overline{H}_k$ for all $k \in \mathbb{N}$, then $H_k=(f^{k+1}(1),f^{k+1}(0))$ for all $k\in\N$ and
\begin{equation}\label{LAMBDAH}
 \Lambda_n = [0,1] \setminus \bigcup_{k=0}^{n-1} H_k \quad \forall n \geq 1.
\end{equation}
Moreover, $\{0,1\}\subset\widetilde{X}$, and for any $n\geq 1$ and $A\in\mathcal{A}_n$ there exists $p$ and $q$
in $\N$ such that $A=[f^p(0),f^q(1)]$.
\end{lemma}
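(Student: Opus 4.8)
The plan is to first record two structural facts that feed everything else. Since $0\in X_1$ and $1\in X_2$ we have $f(0)=f_1(0)$ and $f(1)=f_2(1)$, hence $H_0=(f(1),f(0))=(f_2(1),f_1(0))$. Using that $f_1,f_2$ are increasing, one computes $f_1([0,c))=[f_1(0),1)$ and $f_2((c,1])=(0,f_2(1)]$, and from $f_2(1)<f_1(0)$ these two intervals are disjoint. This already gives three dividends: $f$ is injective on $X\setminus\{c\}$; the images $f_1(\overline X_1)=[f_1(0),1]$ and $f_2(\overline X_2)=[0,f_2(1)]$ are disjoint, so $f$ satisfies the separation property and Lemma \ref{Atom_Unicity} applies; and $\Lambda_1=\overline{f(X\setminus\{c\})}=[0,1]\setminus H_0$, which is the base case $n=1$ of \eqref{LAMBDAH}.

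Next I would prove $H_k=(f^{k+1}(1),f^{k+1}(0))$ and $\{0,1\}\subset\widetilde X$ together by induction on $k$, the case $k=0$ being the computation above. For the step, the hypothesis $c\notin\overline H_k$ forces $\overline H_k=[f^{k+1}(1),f^{k+1}(0)]$ to lie entirely in $[0,c)$ or in $(c,1]$, so on $\overline H_k$ the map $f$ coincides with the continuous increasing map $f_1$ or $f_2$. Since $H_{k+1}=f(H_k)$ by definition, $f$ sends the open interval $H_k$ onto the open interval with endpoints $f^{k+2}(1)$ and $f^{k+2}(0)$, giving $H_{k+1}=(f^{k+2}(1),f^{k+2}(0))$. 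The same hypothesis yields $f^{k+1}(0)\neq c\neq f^{k+1}(1)$ for every $k$, and together with $f^0(0)=0\neq c\neq 1=f^0(1)$ this shows the orbits of $0$ and $1$ avoid $\Delta=\{c\}$, i.e.\ $\{0,1\}\subset\widetilde X$.

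To establish \eqref{LAMBDAH} I would induct on $n$. Writing $S:=f(X\setminus\{c\})=[f_1(0),1)\cup(0,f_2(1)]$ and using the injectivity of $f$ on $X\setminus\{c\}$, the inclusions $H_k\subset X\setminus\{c\}$, and $f(H_k)=H_{k+1}$, the injective set-difference rule gives
\[
f(\Lambda_n\setminus\{c\})=f\Big((X\setminus\{c\})\setminus\textstyle\bigcup_{k=0}^{n-1}H_k\Big)=S\setminus\textstyle\bigcup_{k=1}^{n}H_k .
\]
It then remains to take the closure, and for this I would invoke the elementary fact that if $V$ is open with $\overline V$ disjoint from $\overline S\setminus S$, then $\overline{S\setminus V}=\overline S\setminus V$. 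Here $\overline S\setminus S=\{0,1\}$, and for $k\geq 1$ the endpoints $f^{k+1}(0),f^{k+1}(1)$ lie in $S\subset(0,1)$, so $\overline{H_k}\cap\{0,1\}=\emptyset$; applying the fact with $V=\bigcup_{k=1}^nH_k$ gives
\[
\Lambda_{n+1}=\overline S\setminus\textstyle\bigcup_{k=1}^nH_k=(X\setminus H_0)\setminus\textstyle\bigcup_{k=1}^nH_k=X\setminus\textstyle\bigcup_{k=0}^nH_k .
\]

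Finally, for the atoms I would use that, by the separation property, Lemma \ref{Atom_Unicity} makes the atoms of generation $n$ pairwise disjoint; being closed intervals whose union is $\Lambda_n$, they are precisely the connected components of $\Lambda_n=[0,1]\setminus\bigcup_{k=0}^{n-1}H_k$. Each component $[L,R]$ is bounded on the left by $0=f^0(0)$ or by the right endpoint of an adjacent gap, of the form $f^{k+1}(0)$; and on the right by $1=f^0(1)$ or by the left endpoint of an adjacent gap, of the form $f^{k+1}(1)$. Hence every atom equals $[f^p(0),f^q(1)]$ for some $p,q\in\N$. The most delicate step is the closure argument in \eqref{LAMBDAH}: it is exactly the standing hypothesis $c\notin\overline H_k$ that confines each gap to a single continuity piece (so that $f(H_k)$ is again an interval and the injective set-difference computation is legitimate) and that keeps the gap closures away from the two boundary points $\{0,1\}$ which are the only points gained in passing from $S$ to $\overline S$.
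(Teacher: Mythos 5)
Your proof is correct, and its overall strategy coincides with the paper's: establish $H_k=(f^{k+1}(1),f^{k+1}(0))$ by induction using $c\notin\overline{H}_k$, deduce $\{0,1\}\subset\widetilde{X}$ from the gap endpoints, prove \eqref{LAMBDAH} by induction via the injectivity of $f$ on $X\setminus\{c\}$, and read off the atoms as intervals $[f^p(0),f^q(1)]$. There are two local differences worth noting. First, for the closure step in the induction for \eqref{LAMBDAH}, the paper writes $\Lambda_{n+1}=f_1(\overline{\Lambda_n\cap X_1})\cup f_2(\overline{\Lambda_n\cap X_2})$ and uses that $c$ lies in the interior of $\Lambda_n$ (because $c\notin\overline{H}_k$) to see that the closures contribute exactly the two points $f_1(c)=1$ and $f_2(c)=0$; you instead start from the recursive formula $\Lambda_{n+1}=\overline{f(\Lambda_n\setminus\Delta)}$ and handle the closure with the general identity $\overline{S\setminus V}=\overline{S}\setminus V$ for $V$ open with $\overline{V}\cap(\overline{S}\setminus S)=\emptyset$, checking that $\overline{H}_k\subset(0,1)$ for $k\geq 1$. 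Both are legitimate; yours isolates the topological issue cleanly, the paper's makes the role of $f_1(c),f_2(c)$ explicit. Second, for the atom structure the paper runs a separate induction on generations ($B=f_i(\overline{A\cap X_i})$ with a case split on whether $c\in A$), whereas you identify the atoms with the connected components of $\Lambda_n$ — using Lemma \ref{Atom_Unicity} (hence the separation property, which you correctly verify from the hypothesis $f_2(1)<f_1(0)$) to get pairwise disjointness — and then read the endpoints off the gap decomposition \eqref{LAMBDAH}. This avoids a second induction at the cost of invoking the disjointness lemma; it is slightly slicker, and the only point you gloss over (harmlessly) is the possibility of a degenerate component squeezed between two adjacent gaps, which is still of the form $[f^p(0),f^q(1)]$.
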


\begin{proof}  Assume that $H_k=(f^{k+1}(1),f^{k+1}(0))$ for some $k\in\N$. Then $H_k\subset[0,c)$ or $H_k\subset(c,1]$, because $c\notin H_k$.
Hence, $f$ is continuous and increasing on $H_k$, and  $H_{k+1}=(f^{k+2}(1),f^{k+2}(0))$. As $H_0=(f(1),f(0))$, we have proved by induction that $H_k=(f^{k+1}(1),f^{k+1}(0))$ for every $k\in\N$.

Now let us show \eqref{LAMBDAH} by induction. We have $\Lambda_{1} := f_1( [0,c] ) \cup f_2( [c,1] ) = [f(0), 1] \cup [0, f(1)] = [0,1] \setminus H_0$ and hence \eqref{LAMBDAH} is true for $n=1$. Now let $n \geq 1$, and assume  that
$\Lambda_{n} = [0,1] \setminus \cup_{k=0}^{n-1} H_k$.
We shall prove that $\Lambda_{n+1} = [0,1] \setminus \bigcup_{k=0}^{n} H_k$.
First, observe that
\[
\Lambda_{n+1} = f_1\left(\overline{\Lambda_{n}\cap (0,c)}\right) \cup f_2\left(\overline{\Lambda_{n}\cap (c,1)}\right)= f (\Lambda_{n} \setminus \{c\}) \cup \{ f_1(c) , f_2(c) \},
\]
since $c\in [0,1]\setminus \cup_{k=0}^{n-1} \overline{H}_k$ and therefore it belongs to the interior of  $\Lambda_{n}$.
It follows that,
\begin{eqnarray*}
 \Lambda_{n+1}  = f \left( ([0,1] \setminus \{c\}) \setminus \cup_{k=0}^{n-1} H_k\right) \cup \{ 0 , 1 \}.
\end{eqnarray*}
Besides, since $f$  is injective on $[0,1]\setminus\{c\}$, we have
\[
f\left( ([0,1] \setminus \{c\}) \setminus \cup_{k=0}^{n-1} H_k\right) = f ([0,1] \setminus \{c\}) \setminus f ( \cup_{k=0}^{n-1} H_k).
\]
On the other hand, we have  $f(x)\notin\{0,1\}$ for any $x\neq c$ and $c\notin H_k$ for all $k\in \N$, which implies that
\[
\Lambda_{n+1} = \left( f ([0,1] \setminus \{c\})\cup \{ 0 , 1 \}\right)\setminus \cup_{k=0}^{n-1} f(H_k)
=([0,1]\setminus H_0)\setminus\cup_{k=1}^{n} H_k=[0,1]\setminus\cup_{k=0}^{n} H_k,
\]
as wanted.

Let us prove that $\{0,1\}\subset\widetilde{X}$. Since $c \notin f^k([f(1), f(0)])$ for every $k \geq 0$, we have $c\notin\{f^k(0), f^k(1)\}$ for all $k\geq 1$. It follows that $\{0,1\}\subset\widetilde{X}$, because $c\notin\{0,1\}$.

To end the proof, first observe  that $f_1([0,c])=[f(0),1]$ and $f_2([c,1])=[0,f(1)]$. Therefore, the atoms of $\mathcal{A}_1$ are of the form $[f^p(0),f^q(1)]$. Now, as an induction hypothesis,  assume that for some  $n\geq 1$ and for any $A\in\mathcal{A}_n$ there exists $p$ and $q$ in $\N$ such that $A=[f^p(0),f^q(1)]$. If $B\in\mathcal{A}_{n+1}$, then, by definition of atoms, there exist $i\in\{1,2\}$ and $A\in\mathcal{A}_n$  such that $B=f_i(\overline{A\cap X_i})$, where $X_1:=[0,c)$ and $X_2:=(c,1]$. If $c\notin A$, then $A\subset X_i$ and $B=f(A)=[f^{p+1}(0),f^{q+1}(1)]$. If $c\in A$, then either $A\cap X_i=[f^p(0),c)$ or $A\cap X_i=(c,f^q(1)]$. In both cases $A\cap X_i\neq\emptyset$, since $\{0,1\}\subset\widetilde{X}$. If follows that  either $B=[f^{p+1}(0),1]$ or $B=[0,f^{q+1}(1)]$.
\end{proof}

\begin{Proposition}\label{PRC1} Let $\lambda$ and $\mu\in(0,1)$ be such that $\lambda+\mu>1$ and denote $c:=(1-\mu)/\lambda$.
Let $f_1:[0,c]\to[0,1]$ and $f_2:[c,1]\to [0,1]$ be defined by
\begin{equation}\label{RC1}
f_{1}(x)=\lambda x+ \mu \quad\forall\,x\in[0,c]\quad\text{and}\quad f_{2}(x)=\lambda x+ \mu -1\quad\forall\,x\in[c,1].
\end{equation}
Then, any map $f:[0,1]\to[0,1]$ such that $f|_{[0,c)}=f_1|_{[0,c)}$ and $f|_{(c,1]}=f_2|_{(c,1]}$, and $c\notin f^k([f(1),f(0)])$ for all $k\in\N$, satisfies {\bf P1-6} with
$X_1=[0,c)$ and $X_2=(c,1]$.
\end{Proposition}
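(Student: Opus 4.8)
The plan is to read off all the data of $f$ from \eqref{RC1} and feed them into Lemma~\ref{CR_gaps}. First I would record the elementary values $f_1(0)=\mu$, $f_1(c)=1$, $f_2(c)=0$, $f_2(1)=\lambda+\mu-1$, which, using $\lambda,\mu\in(0,1)$ and $\lambda+\mu>1$, give $0=f_2(c)<f_2(1)<f_1(0)<f_1(c)=1$; thus $f$ is exactly the increasing two-piece contraction of Lemma~\ref{CR_gaps} (in fact the contracted rotation $x\mapsto\lambda x+\mu \bmod 1$). Writing $H_k:=f^k((f(1),f(0)))$, a short induction shows that the standing hypothesis $c\notin f^k([f(1),f(0)])$ is equivalent to $c\notin\overline H_k$ for all $k$: as long as $c\notin f^k([f(1),f(0)])$ the closed interval $[f^{k+1}(1),f^{k+1}(0)]$ lies in a single continuity piece, on which $f$ is affine and increasing, so $f^k([f(1),f(0)])=\overline H_k$. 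Hence Lemma~\ref{CR_gaps} applies and yields $H_k=(f^{k+1}(1),f^{k+1}(0))$, the description \eqref{LAMBDAH} of $\Lambda_n$, the inclusion $\{0,1\}\subset\widetilde X$, and the fact that every atom has the form $[f^p(0),f^q(1)]$. Properties \textbf{P1}, \textbf{P2}, \textbf{P4} are then immediate: \textbf{P1} by \eqref{RC1}; \textbf{P2} because $f_1,f_2$ have nonzero slope (hence are injective) and $f_1([0,c])=[\mu,1]$ is disjoint from $f_2([c,1])=[0,\lambda+\mu-1]$ since $\lambda+\mu-1<\mu$; and \textbf{P4} because $\{f_1(c),f_2(c)\}=\{1,0\}\subset\widetilde X$.

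For \textbf{P3} I would show $\Lambda$ is a Cantor set in three steps. \emph{Total disconnectedness:} by Lemma~\ref{Atom_Unicity} and the fact that atoms are intervals, $\Lambda_n=\bigsqcup_{A\in\mathcal{A}_n}A$ is a disjoint union of closed intervals, so any connected subset of $\Lambda\subset\Lambda_n$ lies in a single atom, whose diameter tends to $0$ by Remark~\ref{RKATOMS}; hence the components of $\Lambda$ are points. \emph{Disjointness of the gaps:} since $f$ is injective on $[0,1]\setminus\{c\}$ with slope $\lambda$ and, by \textbf{P2}, the sets $f(\Lambda_n\cap X_1)$ and $f(\Lambda_n\cap X_2)$ are disjoint, the recursion $\Lambda_{n+1}=\overline{f(\Lambda_n\setminus\{c\})}$ gives $|\Lambda_n|=\lambda^n$; comparing this with $|\Lambda_n|=1-|\bigcup_{k<n}H_k|$ and $\sum_{k<n}|H_k|=1-\lambda^n$ forces the $H_k$ to be pairwise disjoint. \emph{Perfectness:} if some $x\in\Lambda$ were isolated then, the gaps being disjoint open intervals, a punctured neighbourhood of $x$ would lie in gaps, so a connectedness argument would make $x$ simultaneously the right endpoint $f^{k+1}(0)$ of one gap and the left endpoint $f^{j+1}(1)$ of another; but $f^{k+1}(0)=f^{j+1}(1)$ is impossible, because $f$ is injective on $\widetilde X$ while neither $0$ nor $1$ has a preimage in $\widetilde X$, so the orbits of $0$ and $1$ never meet. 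Being nonempty, compact, perfect and totally disconnected, $\Lambda$ is a Cantor set.

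For \textbf{P5} and \textbf{P6} the decisive input is the \emph{minimality} of $\Lambda$. Granting that every orbit of a point of $\Lambda$ is dense in $\Lambda$, \textbf{P5} is immediate since $0,1\in\Lambda$. For \textbf{P6}, take $x\in\widetilde X$: then $\omega(x)$ is a nonempty closed invariant subset of $\Lambda$, hence $\omega(x)=\Lambda$ by minimality, so the orbit of $x$ is dense in $\Lambda$. Moreover $c$ is a two-sided accumulation point of $\Lambda$, because $c\in\Lambda$, $c$ is an endpoint of no gap (as $c\notin\overline H_k$), and $\Lambda$ has empty interior. For each $n$ let $A^{(n)}\in\mathcal{A}_n$ be the atom containing $c$ in its interior; density of the orbit lets it enter both $A^{(n)}\cap X_1$ and $A^{(n)}\cap X_2$, which is exactly $c\in\Delta^n_{lr}(x)$ for every $n$, i.e. $c\in\Delta_{lr}(x)$.

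The hard part is therefore minimality, equivalently the absence of periodic orbits together with uniqueness of the minimal set. This is precisely the classical picture for contracted rotations: the hypothesis $c\notin f^k([f(1),f(0)])$ says the orbit of the gap never hits the discontinuity, which is equivalent to the rotation number being irrational, and in that regime $f$ is semiconjugate to the corresponding irrational rotation and $\Lambda$ is a minimal Cantor set carrying Sturmian itineraries. I would obtain this either by reproducing that semiconjugacy, or, as the paper announces for the base case, by invoking the known $N=2$ examples \cite{BC99,C99,CFLM06,GT88}. I note for completeness that once the itineraries are known to be non-eventually-constant, Theorem~\ref{THCOMPNBLR} already yields $c\in\Delta_{lr}(x)$ directly, giving \textbf{P6} without passing through two-sided accumulation.
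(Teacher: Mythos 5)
Your treatment of \textbf{P1}, \textbf{P2}, \textbf{P4} and of the topological half of \textbf{P3} is sound: the reduction of the hypothesis $c\notin f^k([f(1),f(0)])$ to the hypothesis $c\notin\overline H_k$ of Lemma~\ref{CR_gaps} is correct, and your perfectness argument via disjointness of the gaps and the impossibility of $f^{k+1}(0)=f^{j+1}(1)$ works, although it differs from the paper's. The genuine gap is in \textbf{P5} and \textbf{P6}: you reduce both to the minimality of $\Lambda$ and then leave minimality unproved, proposing either to ``reproduce the semiconjugacy'' to an irrational rotation or to ``invoke the known $N=2$ examples''. Neither is carried out, and neither is what the paper does: the literature is invoked in the proof of Assertion~A for $N=2$ only to produce parameters $(\lambda,\mu)$ for which $c\notin f^k([f(1),f(0)])$ holds for all $k$, not to establish \textbf{P3}, \textbf{P5}, \textbf{P6}, which Proposition~\ref{PRC1} proves from scratch. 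Since the point of the proposition is precisely to supply the base case with these properties, outsourcing minimality (equivalently, irrationality of the rotation number plus uniqueness of the minimal set) skips the central step. A secondary issue: for a map discontinuous at $c\in\Lambda$, the set $\omega(x)$ need not be forward invariant, so ``$\omega(x)$ is a nonempty closed invariant subset, hence equals $\Lambda$'' also needs an argument.

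What you missed is that minimality is not needed here, and that Lemma~\ref{CR_gaps} already hands you \textbf{P5}: every atom of generation $n$ is of the form $[f^p(0),f^q(1)]$, the atoms of generation $n$ cover $\Lambda$, and their diameters tend to $0$; hence every point of $\Lambda$ is within $\epsilon$ of some $f^p(0)$ and of some $f^q(1)$, i.e.\ \emph{both} orbits $\{f^n(0)\}_{n\in\N}$ and $\{f^n(1)\}_{n\in\N}$ are dense in $\Lambda$ --- the same computation that proves perfectness. The paper then gets \textbf{P6} from this density by a shadowing argument rather than by minimality: if the orbit of $x\in\widetilde X$ never entered $(c-\epsilon,c)$, then, comparing $f^{n_0+t}(x)$ with $f^{q+t}(1)$ (they lie in the same shrinking atoms, hence remain $\epsilon/2$-close and on the same side of $c$), the orbit of $1$ would avoid a one-sided neighbourhood $(c-\epsilon_0,c)$; since there are arbitrarily small atoms $[f^{p'}(0),f^{q'}(1)]$ containing $c$ in their interior, this contradicts the density of $\{f^k(1)\}_{k\in\N}$. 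If you want to keep your route instead, you must actually prove minimality; as written, the argument is incomplete at its central point.
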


\begin{proof} Consider a map $f:[0,1]\to[0,1]$ which satisfies the hypotheses of Proposition \ref{PRC1}. Then, it is easy to check that $f$ satisfies {\bf P1-2}, with $c_0=0$, $c_1=c$ and $c_2 = 1$. Also, $f$ satisfies all the hypotheses of Lemma \ref{CR_gaps}, and it follows in particular that ${\bf P4}$ holds.

Let $\gamma\in\{0,1\}$ and let us show that $\{f^n(\gamma)\}_{n\in\N}$ belongs to $\Lambda$. Since for any $k\in\N$ the set $H_k$ of Lemma \ref{CR_gaps} is an open set of $[0,1]$, it cannot contain
$\gamma$.  Therefore, by \eqref{LAMBDAH} we have $\gamma\in\Lambda_n$ for all $n\in\N$, that is $\gamma\in\Lambda$.
Since $\gamma\in\widetilde{X}$ and $\Lambda\cap \widetilde{X}$ is forward invariant, we deduce that $\{f^n(\gamma)\}_{n\in\N}\subset\Lambda$.

 Now, let us prove that $\Lambda$ has no isolated point. Let $x \in \Lambda$ and $\epsilon >0$.
 Let $n \geq 1$ be such that $\text{diam}(A) < \epsilon$ for every $A \in \mathcal{A}_n$.
 Let $A \in \mathcal{A}_n$ be such that $x \in A$, and let $p$ and $q \in \mathbb{N}$ be such that $A = [f^p(0), f^q(1)]$. If $x \in ( f^p(0), f^q(1) ]$, then $0 < |f^p(0)-x| < \epsilon$ and if $x \in [ f^p(0), f^q(1) )$, then $0 < |f^q(1)-x| < \epsilon$. Since both points $f^p(0)$ and $f^q(1) \in \Lambda$, we found a point in $\Lambda\setminus\{x\}$ which is at a distance less than $\epsilon$ of $x$. This  shows that $\Lambda$ is a perfect set (recall that $\Lambda$ is compact) and it proves at the same time that $\{ f^n(0)\}_{n\in\N}$ and $\{ f^n(1)\}_{n\in\N}$ are dense in $\Lambda$, i.e. $f$ satisfies {\bf P5}. Now, $\Lambda$ is totally disconnected because $f$ satisfies the separation property \cite{CGMU16}. It follows that $\Lambda$ is a Cantor set and $f$ satisfies {\bf P3}.

Now we show that $f$ satisfies {\bf P6}. To this end, we prove that  for every $x \in \widetilde X$ and $\epsilon >0$ there exists $l\in\N$ and $r\in\N$ such that $f^{l}(x) \in (c-\epsilon,c)$ and $f^{r}(x) \in (c,c+\epsilon)$.
Let $x \in \widetilde X$ and $\epsilon >0$.
Let $n_0 \in \mathbb{N}$ be such that $\text{diam}(A) < \epsilon/2$ for every $A \in \mathcal{A}_{n_0}$ and let $A \in \mathcal{A}_{n_0}$ be such that $f^{n_0}(x) \in A$. Denote  $p$ and $q$ the integers such that $A = [f^p(0), f^q(1)]$ and let $T = \{ t \in \mathbb{N} : f^{n_0+t}(x) \in (c-\epsilon,c+\epsilon) \}$. Arguing by contradiction, assume that $T=\emptyset$ or that $f^{n_0+t}(x) \in (c,c+\epsilon)$ for all $t \in T$.
Then, by induction on $t\in\N$, we deduce that
\[
0<(f^{q+t}(1)-c)(f^{n_0+t}(x)-c) \quad\text{and}\quad  0\leq f^{q+t}(1)-f^{n_0+t}(x)<\epsilon/2\qquad\forall\,t\in\N.
\]
Therefore, for each $t\in T$ we have $f^{q+t}(1)\in(c,1]$ and for each $t\notin T$ we have
\[
|f^{q+t}(1)-c|\geq|f^{n_0+t}(x)-c|-|f^{q+t}(1)-f^{n_0+t}(x)|>\epsilon/2.
\]
We deduce that $f^{q+t}(1)\notin(c-\epsilon/2,c)$ for all $t\in\N$. Now, let $\nu>0$  be such that $f^k(1)\notin (c-\nu,c)$ for all $k\leq q$. Then, $f^k(1)\notin (c-\epsilon_0,c)$ for all $k\in\N$, where $\epsilon_0=\min\{\nu,\epsilon/2\}$. On the other hand, there exit $p'$ and $q'$ such that $[f^{p'}(0),f^{q'}(1)]$ is an atom of diameter strictly smaller than $\epsilon_0$ which contains $c$. Since $f^{p'}(0)\in\Lambda$ and $f^{k}(1) \notin (c-\epsilon_0,c)$ for all $k\in\N$,  it follows that  $\{ f^k(1) \}_{k \in \mathbb{N}}$ is not dense in $\Lambda$, which is a contradiction. Therefore,  $T\neq\emptyset$ and there exists $l\in\N$ such that $f^{l}(x)\in(c-\epsilon,c)$. Now, if we assume that $f^{n_0+t}(x) \in (c-\epsilon,c)$ for every $t \in T$, we deduce with an analogous proof that $\{ f^k(0) \}_{k \in \mathbb{N}}$ is not dense in $\Lambda$. Therefore,  there exists $r\in\N$ such that $f^{r}(x)\in(c,c+\epsilon)$.
\end{proof}

\begin{proof}[Proof of Assertion A for $N=2$]
Consider a map $f:[0,1]\to[0,1]$ defined by
\[
f(x)=\lambda x + \mu\mod 1\quad\forall x\in [0,1],
\]
where $\lambda$ and $\mu\in(0,1)$ are such that $\lambda+\mu>1$. Then, $f$ is a piecewise contracting map which satisfies the hypothesis \eqref{RC1}, with the particularity that $f(c)=0$.  Immediately, the ``gap"  between the atoms of generation 1 is the interval $(f(1), f(0))$. It is standard to prove  that if there exists a minimum  natural number $k $ such that $c\in f^k([f(1),f(0)])$, then the attractor of
$f$ contains only periodic points. On the other hand,  it has been proved  using a rotation number approach
that there is an uncountable set of values of $(\lambda,\mu)$ such that $f$ has no periodic points \cite{B93,BC99,C99,CFLM06,V87}.
It follows that, for such values of $(\lambda,\mu)$,   $c\notin f^k([f(1),f(0)])$ for all $k\in\N$.
Together with Proposition \ref{PRC1} this proves that Assertion A holds for $N=2$.
\end{proof}

\subsection{Full complexity with any  number of discontinuities}\label{CIRK}

In the previous subsection we  proved the existence of a map  satisfying {\bf P1-6} with  ${N}=2$ (a piecewise contracting interval map with a single
discontinuity). In this subsection we will complete the proof  of Assertion A, by induction on $N$.

 Let us assume that Assertion A holds for some $N\geq 2$. Then, there exists $c_0<c_{1}<\dots<c_{N}$ in $\R$
and $f:X\to X$, where  $X=[c_0,c_{N}]$, which satisfies {\bf P1-6} with the contraction pieces $X_1=[c_0,c_1),\dots,X_{N}=(c_{N-1},c_{N}]$. In the following, we denote $\Delta_f:=\{c_i\}_{1\leq i\leq {N-1}}$ the set of the discontinuities of $f$, $\Lambda_f$ the attractor of $f$, and $\widetilde{X}_f$  the set defined by  \eqref{XTILDE} where $\Delta=\Delta_f$.

Now, we   construct a new map $g$ with $N+1$ contraction pieces  and satisfying {\bf P1-6}, from the given map $f$ that satisfies {\bf P1-6} and has   $N$ contraction pieces.
The construction involves what we call a {\it well-cutting} orbit  of $f$,
 defined as follows:

\begin{definition}\em We say that an orbit $\{\xi_r\}_{r\in\N}$ of $f$ is \emph{well-cutting}, if $\xi_0\in\Lambda_f\cap\widetilde{X}_f$ and  $\{\xi_r\}_{r\in\N}$ does not contain any point of the following sets:

\noindent 1) the boundaries of the gaps of the Cantor set $\Lambda_f$,

\noindent 2) the orbits of $c_0$ and $c_{N}$,

\noindent 3) the orbits of $f_{i}(c_i)$ and $f_{i+1}(c_i)$ for all $i\in\{1,\dots,{N-1}\}$.

\end{definition}

Note that $f$ has an uncountable number of well-cutting orbits. Indeed, $\Delta_f$ and the sets of items  1), 2), 3) are countable. Therefore, the set $P$ of all their pre-images is also countable. Since $\Lambda_f$ is uncountable, the complement of $P$ in $\Lambda_f$ is uncountable and contains only  well-cutting orbit of $f$. Also, a well-cutting orbit is not eventually periodic. Indeed,  by Theorem \ref{THCOMPNBLR},  the property {\bf P6}  implies that the complexity function of the itinerary of $\xi_0$ is not eventually constant.

Let  $\lambda\in(0,1)$ be the slope of $f$ on any of its contraction pieces, and let $\{\xi_r\}_{r\in\N}$ be a well-cutting orbit of $f$. Consider the function $\phi:[c_0,c_{N}]\to\R$ defined for any $x\in[c_0,c_{N}]$ by
\begin{equation}\label{PHI}
\phi(x)=x+\sum\limits_{n\in\mathcal{N}(x)}\lambda^n\quad\text{where}\quad
\mathcal{N}(x):=\{n\geq 1 : \xi_n<x\}.
\end{equation}
The following lemma gathers  basic properties of $\phi$ that we will use in this section.

\begin{lemma}\label{LPHI}  The function $\phi$ is strictly increasing, left-continuous on $[c_0,c_N]$, continuous on $[c_0,c_N]\setminus\{\xi_r\}_{r\geq 1}$ (in particular at $\xi_0$), and $\lim\limits_{x\searrow\xi_r}\phi(x)=\phi(\xi_r)+\lambda^r$ for all $r\geq 1$. Moreover,
\[
\phi([c_0,c_N])=\left[\phi(c_0),\phi(c_N)\right]\setminus\bigcup_{r=1}^\infty G_r \quad\text{where}\quad G_r:=\left(\phi(\xi_r),\phi(\xi_r)+\lambda^r\right]\quad\forall\,r\geq 1,
\]
and $\overline{G}_r\cap \overline{G}_l=\emptyset$ for all $r\neq l$.
\end{lemma}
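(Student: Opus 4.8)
The plan is to establish the six claims in the order: pairwise distinctness of the orbit points and monotonicity; the three (semi)continuity statements; the description of the image; and finally the disjointness of the closures $\overline{G}_r$. Throughout I will use that a well-cutting orbit is not eventually periodic, hence \emph{injective}: if $\xi_n=\xi_m$ with $n<m$, then $\xi_n$ would be periodic and the orbit eventually periodic, a contradiction. Thus the $\xi_k$ ($k\geq 0$) are pairwise distinct; in particular $\xi_0\notin\{\xi_r\}_{r\geq 1}$, and by item 2) of the definition of a well-cutting orbit the endpoints $c_0,c_N$ also lie outside $\{\xi_r\}_{r\geq 1}$. Since $\lambda\in(0,1)$, the series $\sum_{n\geq 1}\lambda^n$ converges, so $\phi$ is finite-valued with every partial sum bounded by $\lambda/(1-\lambda)$. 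For monotonicity I would note that $x<y$ implies $\mathcal{N}(x)\subseteq\mathcal{N}(y)$, whence $\phi(y)-\phi(x)=(y-x)+\sum_{n\in\mathcal{N}(y)\setminus\mathcal{N}(x)}\lambda^n\geq y-x>0$, giving strict monotonicity.

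The three continuity statements all rest on a single uniform tail estimate: for reals $a\leq b$, the sum $\sum_{\{n\geq 1:\,a\leq\xi_n<b\}}\lambda^n$ can be made arbitrarily small by first choosing $M$ with $\sum_{n>M}\lambda^n<\epsilon$ and then shrinking the window so that none of the finitely many indices $n\leq M$ (all of whose $\xi_n$ are distinct from the relevant endpoint) falls inside it; this is exactly where distinctness of the $\xi_k$ is used. Applying this to the window $[y,x)$ as $y\nearrow x$ yields $\lim_{y\nearrow x}\phi(y)=\phi(x)$, i.e. left-continuity everywhere. Applying it to $[x,y)$ as $y\searrow x$, when $x\notin\{\xi_r\}_{r\geq 1}$ the window eventually contains no index (no $\xi_n$ equals $x$), giving right-continuity, hence continuity at such $x$, and in particular at $\xi_0$, $c_0$ and $c_N$. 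Finally, for $x=\xi_r$ with $r\geq 1$, I would split $\mathcal{N}(x)=\mathcal{N}(\xi_r)\sqcup\{n:\xi_r\leq\xi_n<x\}$ and use injectivity, i.e. $\{n:\xi_n=\xi_r\}=\{r\}$, so the only surviving index as $x\searrow\xi_r$ is $n=r$; this gives $\lim_{x\searrow\xi_r}\phi(x)=\phi(\xi_r)+\lambda^r$.

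The substantial step, which I expect to be the main obstacle, is the identity for $\phi([c_0,c_N])$. Since $\phi$ is strictly increasing, $\phi([c_0,c_N])\subseteq[\phi(c_0),\phi(c_N)]$, and each $G_r$ is omitted: for $x\leq\xi_r$ one has $\phi(x)\leq\phi(\xi_r)$, while for $x>\xi_r$ the index $r$ lies in $\mathcal{N}(x)$, so $\phi(x)\geq\phi(\xi_r)+(x-\xi_r)+\lambda^r>\phi(\xi_r)+\lambda^r$; hence no value of $G_r=(\phi(\xi_r),\phi(\xi_r)+\lambda^r]$ is attained. For the reverse inclusion I would fix $v\in[\phi(c_0),\phi(c_N)]\setminus\bigcup_r G_r$ and set $x_*:=\sup\{t\in[c_0,c_N]:\phi(t)\leq v\}$, a nonempty bounded set (it contains $c_0$). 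Left-continuity gives $\phi(x_*)\leq v$, while $\phi(t)>v$ for $t>x_*$ forces $\lim_{t\searrow x_*}\phi(t)\geq v$ when $x_*<c_N$ (and $v=\phi(c_N)$ directly when $x_*=c_N$). If $x_*\notin\{\xi_r\}_{r\geq 1}$, continuity forces $\phi(x_*)=v$; if $x_*=\xi_r$, then $\phi(\xi_r)\leq v\leq\phi(\xi_r)+\lambda^r$, and since $v\notin G_r$ the only possibility is $v=\phi(\xi_r)=\phi(x_*)$. In every case $v\in\phi([c_0,c_N])$, giving equality.

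For disjointness I would take $r\neq l$, say $\xi_r<\xi_l$ (possible by distinctness). Since $r\in\{n\geq 1:\xi_r\leq\xi_n<\xi_l\}$, I get $\phi(\xi_l)-\phi(\xi_r)=(\xi_l-\xi_r)+\sum_{\{n:\xi_r\leq\xi_n<\xi_l\}}\lambda^n\geq(\xi_l-\xi_r)+\lambda^r>\lambda^r$. Thus the left endpoint $\phi(\xi_l)$ of $\overline{G}_l=[\phi(\xi_l),\phi(\xi_l)+\lambda^l]$ strictly exceeds the right endpoint $\phi(\xi_r)+\lambda^r$ of $\overline{G}_r=[\phi(\xi_r),\phi(\xi_r)+\lambda^r]$, so $\overline{G}_r\cap\overline{G}_l=\emptyset$, completing the proof.
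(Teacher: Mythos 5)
Your proof is correct, and for the monotonicity and the three (semi)continuity claims it follows the same route as the paper: the two-step tail estimate (cut the series at $M$, then shrink the window to avoid the finitely many $\xi_n$ with $n\leq M$) is exactly the paper's argument for left-continuity, with the same adaptation for right-hand limits; your explicit remark that injectivity of the orbit (no eventual periodicity, which the paper establishes just before the lemma via {\bf P6}) is what pins the jump at $\xi_r$ to exactly $\lambda^r$ is a detail the paper leaves implicit. For the image identity the paper merely says it is ``standard to check''; your supremum argument ($x_*:=\sup\{t:\phi(t)\leq v\}$, then left-continuity plus the case split on whether $x_*\in\{\xi_r\}_{r\geq 1}$) is a complete and correct way to fill that in. The one place where you genuinely diverge is the disjointness of the $\overline{G}_r$: the paper deduces it from the image identity together with injectivity of $\phi$ (namely $\phi(\xi_r)\in\phi([c_0,c_N])$ cannot lie in $G_l$, nor equal $\phi(\xi_l)$, hence lies strictly above $\overline{G}_l$), whereas you prove the quantitative separation $\phi(\xi_l)-\phi(\xi_r)>\lambda^r$ directly from the definition of $\phi$, using $r\in\mathcal{N}(\xi_l)\setminus\mathcal{N}(\xi_r)$. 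Your version is more self-contained (it does not rely on the image identity) and gives slightly more information; both are valid.
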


\begin{proof} Noting that $\mathcal{N}(x)\subset\mathcal{N}(x')$ for any $x<x'$, it is straightforward to show that $\phi$ is strictly increasing.

Now we show that the left-hand limit of $\phi$ at any point $x_0\in(c_0,c_N]$ is equal to $\phi(x_0)$. Let $x_0\in(c_0,c_N]$ and $\epsilon>0$. Take $m_0$ such that
$\sum_{n=m_0}^\infty\lambda^{n}<\epsilon/2$ and let $\rho>0$ be such that $\xi_n\notin(x_0-\rho,x_0)$ for all $n<m_0$. Now, if $\delta:=\min\{\epsilon/2,\rho\}$, then for any $x\in(x_0-\delta,x_0)$, we have
\[
|\phi(x_0)-\phi(x)|= x_0-x+\sum_{n\in\mathcal{N}(x_0)}\lambda^n -\sum_{n\in\mathcal{N}(x)}\lambda^n<\frac{\epsilon}{2}+\sum_{n\in\mathcal{N}(x_0)\setminus\mathcal{N}(x)}\lambda^n<\epsilon,
\]
since $\min{\mathcal{N}(x_0)\setminus\mathcal{N}(x)}\geq m_0$ if  $\xi_n\notin(x_0-\rho,x_0)$ for all $n<m_0$.
With an analog proof, we can show that the right-hand limit of $\phi$ at $x_0\in[c_0,c_N)$ is equal to $\phi(x_0)$ if $x_0\neq\xi_r$ for any $r\geq 1$, and equal to $\phi(\xi_r)+\lambda^r$ if $x_0=\xi_r$ for some $r\geq 1$.

Taking into account that $\phi$ is strictly increasing, left-continuous and has a discontinuity jump
of magnitude $\lambda^r$ at every point $\xi_r$ with $r\geq 1$, it is standard to check that
\[
\phi([c_0,c_N])=\left[\phi(c_0),\phi(c_N)\right]\setminus\bigcup_{r=1}^\infty G_r.
\]

Finally we show that $\overline{G}_r\cap \overline{G}_l=\emptyset$ for all $r\neq l$. Let $l$ and $r\geq 1$ be such that $r\neq l$ and $\xi_r>\xi_l$. As $\phi(\xi_r)\in \phi([c_0,c_N])$ we have that $\phi(\xi_r)\notin {G}_l$, and as $\phi$ is injective we have that $\phi(\xi_r)\notin \overline{G}_l$. It follows that $\overline{G}_r\cap \overline{G}_l=\emptyset$, since $\phi(\xi_r)>\phi(\xi_l)$.
\end{proof}

The following proposition shows how a well-cutting orbit of $f$ and its associated function $\phi$ allow us to obtain a map  which satisfies {\bf P1-6} with ${N+1}$ contraction pieces.
Therefore, it ends the proof  of Assertion A by induction on $N$.

\begin{Proposition}\label{PROPDELAMORT}  Let $\{\xi_r\}_{r\in\N}$ be a well-cutting orbit of $f$ and $\phi$ be defined according to \eqref{PHI}. Let $\Delta_g:=\phi(\Delta_f\cup\{\xi_0\})$. Then, any map $g:[\phi(c_0),\phi(c_{N})]\to[\phi(c_0),\phi(c_{N})]$ defined on $[\phi(c_0),\phi(c_{N})]\setminus\Delta_g$ by
\begin{equation}\label{DEFG}
g(y)=\left\{\begin{array}{lcl}
\phi\circ f\circ\phi^{-1}(y)&\text{if}& y\in\phi([c_0,c_{N}])\setminus\Delta_g\\
\lambda(y-\phi(\xi_r))+\phi(\xi_{r+1})&\text{if}& y\in G_r\text{ and } r\geq 1
\end{array}
\right.
\end{equation}
satisfies {\bf P1-6} with $N+1$ contraction pieces.
\end{Proposition}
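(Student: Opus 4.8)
The plan is to establish the six properties \textbf{P1}--\textbf{P6} for $g$ one at a time, using Lemma \ref{LPHI}, the inductive hypothesis that $f$ satisfies \textbf{P1}--\textbf{P6}, and the defining clauses of a well-cutting orbit. From the inductive hypothesis I may also use that $\Lambda_f$ is a \emph{minimal} Cantor set (this is exactly what the argument proving Theorem \ref{TH3} from Assertion A yields), so that the forward orbit of every $x\in\widetilde X_f$ is dense in $\Lambda_f$. The guiding principle is that $\phi$ conjugates $f$ to $g$ on the image $\phi([c_0,c_N])$, i.e. $g\circ\phi=\phi\circ f$ there, while the inserted gaps are permuted affinely by the shift $G_r\mapsto G_{r+1}$; in particular I expect $\Lambda_g=\overline{\phi(\Lambda_f)}$ and that the new discontinuity at $\phi(\xi_0)$ arises precisely because $\xi_1=f(\xi_0)$ carries the jump $G_1$.

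For \textbf{P1}, I would identify the $N+1$ contraction pieces of $g$ as the maximal open intervals cut out by the $N$ points of $\Delta_g=\{\phi(c_1),\dots,\phi(c_{N-1}),\phi(\xi_0)\}$: since $\xi_0\in\widetilde X_f$ lies in a single piece $X_{i_0}$ and $\phi$ is continuous at $\xi_0$, the point $\phi(\xi_0)$ splits $\phi(X_{i_0})$ in two, producing the extra piece. The key computation is that $g$ has slope $\lambda$ on each piece: on every maximal subinterval free of the jump points $\{\xi_r\}_{r\geq1}$ both $\phi$ and $\phi^{-1}$ act as translations, so $\phi\circ f\circ\phi^{-1}$ has slope $\lambda$ there, and on each gap $G_r$ the slope is $\lambda$ by \eqref{DEFG}. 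Because $g$ is continuous across each gap boundary (using that $f$ is continuous and increasing at $\xi_r\in\widetilde X_f$, the left endpoint $\phi(\xi_r)$ maps to $\phi(\xi_{r+1})$ from both sides), a continuous function with right-derivative $\lambda$ at every point of a piece is affine of slope $\lambda$; this handles the countable accumulation of gaps.

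Injectivity of each continuous extension is immediate from $\lambda>0$, so \textbf{P2} reduces to disjointness of images: for two old pieces this is the separation property of $f$ transported through the order-preserving injection $\phi$; for gap pieces it follows from $G_r\mapsto G_{r+1}$ and $\overline G_r\cap\overline G_l=\emptyset$ for $r\neq l$ (Lemma \ref{LPHI}); and at the new discontinuity $\phi(\xi_0)$ the two one-sided limits of $g$ are $\phi(\xi_1)$ and $\phi(\xi_1)+\lambda$, the endpoints of $G_1$, so the images of the two abutting sub-pieces are separated by $G_1$. Once \textbf{P2} holds, $\Lambda_g$ is totally disconnected by \cite{CGMU16}, and perfectness (no isolated points) follows as in the base case from the fact that each point of $\Lambda_g=\overline{\phi(\Lambda_f)}$ is a two-sided limit of $\Lambda_g$ (a well-cutting orbit avoids the boundaries of the gaps of $\Lambda_f$, so $\phi$ creates no isolated points); this gives \textbf{P3}. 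Properties \textbf{P4} and \textbf{P5} are then direct: the lateral images at $\phi(c_i)$ are $\phi(f_i(c_i))$, whose $g$-orbits are $\phi$-images of $f$-orbits of $f_i(c_i)\in\widetilde X_f$ and avoid $\Delta_g$ because the well-cutting orbit misses the orbits of the $f_i(c_i),c_0,c_N$; the lateral images at $\phi(\xi_0)$ are $\phi(\xi_1)$ and $\phi(\xi_1)+\lambda$, with orbits $\{\phi(\xi_k)\}_{k\geq1}$ and $\{\phi(\xi_k)+\lambda^{k}\}_{k\geq1}$ avoiding $\Delta_g$ since $\{\xi_r\}$ is not eventually periodic and stays in $\widetilde X_f$; and \textbf{P5} holds because $\{\phi(\xi_k)\}_{k\geq1}$ is the $g$-orbit of $\phi(\xi_1)$, which is dense in $\overline{\phi(\Lambda_f)}=\Lambda_g$ since the orbit of $\xi_0$ is dense in the minimal set $\Lambda_f$.

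Finally, for \textbf{P6} I would show every discontinuity of $g$ is lr-recurrently visited by every orbit of $\widetilde X_g$, in two cases. If $y=\phi(x)$ with $x\in\widetilde X_f$, its $g$-orbit is $\phi$ applied to the $f$-orbit of $x$, which is dense in $\Lambda_f$ by minimality; since each $c_i$ and the cutting point $\xi_0$ are two-sided accumulation points of $\Lambda_f$, this dense orbit hits, at every atom generation, the atom of the point from both adjacent pieces, and transporting through $\phi$ gives $\phi(c_i),\phi(\xi_0)\in\Delta_{lr}(y)$. If instead $y\in G_r$, then $g^k(y)\in G_{r+k}$, and since $\{\phi(\xi_m)\}_m$ is dense in $\Lambda_g$ and the gaps $G_m$ accumulate on each discontinuity from both sides, one can choose, for each generation, indices $m\geq r$ with $\xi_m$ close to the relevant point on each side and $\lambda^m$ small enough that the orbit point $g^{m-r}(y)\in G_m$ lies in the prescribed atom and on the prescribed side, giving the same conclusion. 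I expect the main obstacle to be the bookkeeping in \textbf{P1}--\textbf{P2}: proving that the \emph{singular} conjugacy $\phi$ yields a genuinely piecewise-affine map of slope $\lambda$ in spite of the countable family of accumulating gaps, and that the separation property survives at the newly created discontinuity $\phi(\xi_0)$; the gap-orbit case of \textbf{P6} is the other delicate point.
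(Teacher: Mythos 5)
Your overall architecture matches the paper's: verify \textbf{P1}--\textbf{P6} one by one, using that $\phi$ conjugates $f$ to $g$ on $\phi([c_0,c_N])$ while the inserted gaps are shifted affinely, and several of your sub-arguments (e.g.\ deducing \textbf{P5} and the first case of \textbf{P6} from minimality of $\Lambda_f$ rather than from the dense lateral orbit of an old discontinuity, as the paper does via Lemma \ref{DENSITY}) are legitimate variants. However, your proof of \textbf{P1} has a genuine gap, and it sits exactly at the point that justifies the choice of the gap lengths $\lambda^r$. You establish that $g$ has slope $\lambda$ on each inserted gap $G_r$ and on each maximal interval free of the points $\{\xi_r\}_{r\geq 1}$; since $\{\xi_r\}_{r\geq 1}$ is dense in the Cantor set $\Lambda_f$, this only verifies that $g$ is affine with slope $\lambda$ on an open dense set whose complement $\phi(\Lambda_f)$ has measure zero. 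You then invoke ``continuous with right-derivative $\lambda$ at every point $\Rightarrow$ affine'', but you have not computed the right-derivative at any point of $\phi(\Lambda_f)$, and continuity plus slope $\lambda$ off a measure-zero Cantor set does \emph{not} force affineness: $y\mapsto\lambda y+h(y)$ with $h$ a devil's staircase supported on $\phi(\Lambda_f)$ satisfies everything you have checked. The missing step is the exact identity $\phi(f(x'))-\phi(f(x))=\lambda\bigl(\phi(x')-\phi(x)\bigr)$ for $x<x'$ in one piece, which the paper obtains from the index shift $\{n\geq 1: f(x)\leq\xi_n<f(x')\}=\{n+1: x\leq\xi_n<x'\}$ (a consequence of the injectivity and monotonicity of $f$ on the piece), so that $\sum_{f(x)\leq\xi_n<f(x')}\lambda^n=\lambda\sum_{x\leq\xi_n<x'}\lambda^n$. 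This is the one computation that actually uses $|G_r|=\lambda^r$, and without it (or an equivalent substitute, e.g.\ a Stieltjes-measure argument showing that $g$ assigns zero increment to $\phi(\Lambda_f)$ because $g(\phi(\Lambda_f))\subset\phi(\Lambda_f)$ has measure zero) \textbf{P1} is not proved.

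A second, lesser, issue: you ``expect'' $\Lambda_g=\overline{\phi(\Lambda_f)}$ and build \textbf{P3} and \textbf{P5} on it, but this identity requires proof. In particular one must show that the interiors of the inserted gaps are eventually expelled from $\Lambda_{g,n}$ while their right endpoints $\phi(\xi_r)+\lambda^r$ persist; the paper does this by proving $\Lambda_{g,n}\cap G=\{\phi(\xi_r)+\lambda^r,\ 1\leq r\leq n\}\cup\bigcup_{r\geq n+1}G_r$ by induction, and separately $\Lambda_{g,n}\cap\phi(X)=\phi(\Lambda_{f,n})$, the latter step using the lr-recurrence of the $c_i$ to control the closures $\overline{\Lambda_{g,n}\cap Y_j}$. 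This is an unexecuted (though true and provable) claim rather than a wrong approach, but two of the six properties rest on it as written.
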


\begin{proof}  Let $d_0<d_1<\dots<d_{N+1}$ be such that $\{d_0,d_1,\dots,d_{N+1}\}=\phi(\Delta_{f}\cup\{c_0,\xi_0,c_{N}\})$. We denote $j_0$ the integer of
$\{1,\dots,N\}$ such that
\begin{equation}\label{DJ0}
d_{j_0}=\phi(\xi_0).
\end{equation}
Let $Y_1:=[d_0,d_1),Y_2:=(d_1,d_2),\dots,Y_{N+1}:=(d_N,d_{N+1}]$ and $Y:=[d_0,d_{N+1}]$.
Since $\phi$ is strictly increasing, the sets $Y_j$ are all non-empty and pairwise disjoint.  Let $g:Y\to Y$ be a map satisfying \eqref{DEFG}. We are going to show that $g$ satisfies {\bf P1-6} with $Y_1,Y_2,\dots,Y_{N+1}$.

\medskip
\noindent {\bf P1)} We first show that  for any  $j\in\{0,1,\dots,N\}$ the map $g$ is affine with slope $\lambda$ on the interval $(d_j,d_{j+1})$, that is
\begin{equation}\label{AFFINE}
g(y')-g(y)=\lambda(y'-y)\qquad\forall\, y,y'\in (d_j,d_{j+1}).
\end{equation}
To prove \eqref{AFFINE}, we fix $j\in\{0,1,\dots,N\}$ and $y,y'\in(d_j,d_{j+1})$, and we consider three cases:

\noindent Case 1: Assume that  $y,y'\in\phi([c_0,c_{N}])$ and denote $x:=\phi^{-1}(y)>\phi^{-1}(d_j)$ and $x':=\phi^{-1}(y')<\phi^{-1}(d_{j+1})$.
If we assume (with no loss of generality) $y<y'$, then $x<x'$ and
\[
g(y')-g(y)=f(x')-f(x)+\sum_{n\in\mathcal{N}_2}\lambda^n \qquad\text{where}\qquad
\mathcal{N}_2:=\{n\geq 1: f(x)\leq \xi_n<f(x')\}.
\]
Since $f$ is affine on $(\phi^{-1}(d_j),\phi^{-1}(d_{j+1}))$ and has  slope $\lambda$,
we have that $f(x')-f(x)=\lambda(x'-x)$. On the other hand, since $f$ is injective (separation property)
and increasing on $(\phi^{-1}(d_j),\phi^{-1}(d_{j+1}))$ we have that $n\in\mathcal{N}_2$ if and only if $n\geq 2$ and $n-1\in
\mathcal{N}_1$, where $\mathcal{N}_1:=\{n\geq 1: x\leq \xi_n<x'\}$. It follows that
\[
g(y')-g(y)=\lambda(x'-x)+\sum_{n\in\mathcal{N}_1}\lambda^{n+1}=\lambda(\phi(x')-\phi(x))=\lambda(y'-y).
\]

\noindent  Case 2: Assume $y\in G_r$ for some $r\geq 1$ and $y'\in\phi([c_{0},c_{N}])$. Since $G_r$ is an interval and $G_r\cap\phi([c_{0},c_{N}])=\emptyset$, we have $G_r\subset (d_j,d_{j+1})$. Therefore, $\phi(\xi_r)\in(d_j,d_{j+1})$, because $r\neq 0$ and $\{\xi_r\}_{r\in\N}$ is a well-cutting orbit. So we can use Case 1 to obtain
\[
g(y') - g(\phi(\xi_r))=\lambda(y'-\phi(\xi_r)).
\]
On the other hand, by definition of $g$ on $G_r$
\[
g(\phi(\xi_r))-g(y)=\phi(\xi_{r+1})-\lambda(y-\phi(\xi_r))-\phi(\xi_{r+1}),
\]
and the sum of these two equalities is $\eqref{AFFINE}$.

\noindent  Case 3: Assume $y\in G_r$ and $y'\in G_r'$ for some $r$ and $r'\geq 1$ and let $z\in\phi([c_0,c_N])\cap(d_j,d_{j+1})$. Then applying Case 2 twice, we obtain
\[
g(y')-g(y)=g(y')-g(z)+g(z)-g(y)=\lambda(y'-y),
\]
which ends the proof of \eqref{AFFINE}.

 According to \eqref{AFFINE}, we know that $g$ is continuous on $\cup_{j=0}^{N}(d_j,d_{j+1})$. To study the continuity of $g$ on the border of these intervals, we compute the left-hand and right-hand limits of $g$
at the points of $\phi(\Delta_f\cup\{c_0,c_{N}\})$. Let $i\in\{1,2,\dots,N\}$ and consider an increasing sequence $\{x_n\}_{n\in\N}$ in $(c_{i-1},c_{i})$ which converges to $c_i$. Then, the sequence $\{\phi(x_n)\}_{n\in\N}$ is increasing and converges to $\phi(c_i)$, because $\phi$ is continuous
at any point that  is not in the positive orbit of $\xi_0$, and $\{\xi_r\}_{r\in\N}$ is a well-cutting orbit.
Using the definition of $g$ on $\phi([c_0,c_{N}])\setminus\Delta_g$ and the continuous extension $f_i$ of $f|_{(c_{i-1},c_{i})}$ to $[c_{i-1},c_{i}]$ we obtain that
\[
\lim_{n\to\infty}g(\phi(x_n))=\lim_{n\to\infty}\phi(f(x_n))=\lim_{n\to\infty}\phi(f_i(x_n))=\lim_{x\to f_i(c_i)}\phi(x).
\]
Once again, since $\{\xi_r\}_{r\in\N}$ is a well-cutting orbit, $f_i(c_i)$ does not belong to $\{\xi_r\}_{r\in\N}$ and $\phi$ is continuous at $f_i(c_i)$. It follows that
\begin{equation}\label{LLIMIT}
\lim_{y\nearrow \phi(c_i)}g(y)=\phi(f_i(c_i)).
\end{equation}
Similarly,  for any $i\in\{0,1,\dots,{N-1}\}$  we have
\begin{equation}\label{RLIMIT}
\lim_{y\searrow \phi(c_i)}g(y)=\phi(f_{i+1}(c_i)).
\end{equation}
Since $f_{i+1}(c_i)\neq f_{i}(c_i)$ for any $i\in\{1,\dots,{N-1}\}$ (separation property) and $\phi$ is injective, we deduce that $g$ is discontinuous on $\phi(\Delta_f)$. On the other hand, from \eqref{LLIMIT} and \eqref{RLIMIT} respectively, we obtain that $g$ is continuous at $\phi(c_{N})$ and $\phi(c_{0})$ respectively.

It remains to study $g$ at $\phi(\xi_0)$. As $\{\xi_r\}_{r\in\N}$ is a well-cutting orbit, there exists $i\in\{1,2,\dots,{N}\}$
such that $\xi_0\in(c_{i-1},c_{i})$. Using the left continuity of $\phi$,
the monotonicity of $f$ on $(c_{i-1},c_{i})$, and the continuity of $f$ at $\xi_0$, we obtain that
\begin{equation}\label{LLIMITD}
\lim_{y\nearrow \phi(\xi_0)}g(y)=\phi(f(\xi_0))=\phi(\xi_1).
\end{equation}
On the other hand
\begin{equation}\label{RLIMITD}
\lim_{y\searrow \phi(\xi_0)}g(y)=\lim_{x\searrow \xi_1}\phi(x)=\phi(\xi_1)+\lambda.
\end{equation}
Equalities \eqref{LLIMITD} and \eqref{RLIMITD} prove that $g$ is discontinuous at $\phi(\xi_0)$. We conclude that the set of the discontinuity points of $g$ is $\Delta_g$, which together
with \eqref{AFFINE}  proves that $g$ is a piecewise affine contracting map with contractions pieces $Y_1,Y_2,\dots,Y_{N+1}$. In particular, the set of \eqref{XTILDE} writes for $g$ as
$\widetilde{X}_g=\bigcap _{n= 0}^{+ \infty} g^{-n}(X\setminus\Delta_g)$.

\medskip
\noindent{\bf P2)} Let us prove  that $g$ satisfies the separation property. For any $j\in\{1,\dots,N+1\}$ denote $g_j:\overline{Y_j}\to[\phi(c_0),\phi(c_{N})]$ the continuous extension of $g|_{Y_j}$ to $\overline{Y_j}$. Then,
\[
\overline{Y_{j_0}}=[\phi(c_{j_0-1}),\phi(\xi_0)],\ \overline{Y_{j_0+1}}=[\phi(\xi_0),\phi(c_{j_0})]
\quad\text{and}\quad
\overline{Y_j}=\left\{\begin{array}{lcl}
[\phi(c_{j-1}),\phi(c_{j})] &\text{if}& 1\leq j <j_0\\

[\phi(c_{j-2}),\phi(c_{j-1})] &\text{if}& j_0+1< j \leq N+1
\end{array}
\right.,
\]
where $j_0$ is defined by \eqref{DJ0}.
For every $j\in\{1,\dots,N+1\}$ the map $g_j$ is affine with slope $\lambda> 0$. Therefore,  $g_j(\overline{Y}_j)$ is an interval whose boundaries are obtained using \eqref{LLIMIT}, \eqref{RLIMIT}, \eqref{LLIMITD} and \eqref{RLIMITD}. Indeed,
\[
g_{j_0}(\overline{Y_{j_0}})=[\phi(f_{j_0}(c_{j_0-1})),\phi(f_{j_0}(\xi_0))],\quad g_{j_0+1}(\overline{Y_{j_0+1}})=[\phi(f_{j_0}(\xi_0))+\lambda,\phi(f_{j_0}(c_{j_0}))],
\]
and
\[
g_j(\overline{Y_j})=\left\{\begin{array}{lcl}
[\phi(f_{j}(c_{j-1})),\phi(f_{j}(c_{j}))] &\text{if}& 1\leq j <j_0\\

[\phi(f_{j-1}(c_{j-2})),\phi(f_{j-1}(c_{j-1}))] &\text{if}& j_0+1< j \leq N+1
\end{array}
\right. .
\]
On the one hand we have obtained that  $g_{j_0}(\overline{Y}_{j_0})\cap g_{j_0+1}(\overline{Y}_{j_0+1})=\emptyset$.
On the other hand, the monotonicity of $\phi$ and the separation property of $f$ imply that the sets
\[
g_1(\overline{Y_1}),\dots, g_{j_0-1}(\overline{Y_{j_0-1}}), g_{j_0}(\overline{Y_{j_0}})\cup g_{j_0+1}(\overline{Y_{j_0+1}}), g_{j_0+2}(\overline{Y_{j_0+2}}),\dots, g_{N+1}(\overline{Y_{N+1}})
\]
are pairwise disjoint. It follows that $g$ satisfies the separation property.

\medskip
\noindent{\bf P3)} Now we study the attractor of $g$, which can be written using the continuous extensions of $g$ as
\[
\Lambda_g=\bigcap_{n=1}^{\infty}\Lambda_{g,n},
\]
where the sets $\Lambda_{g,n}$ are recursively defined as
\[
\Lambda_{g,1}=\bigcup_{j=1}^{N+1}g_j(\overline{Y_j})\quad\text{and}\quad
\Lambda_{g,n+1}=\bigcup_{j=1}^{N+1}g_j(\overline{\Lambda_{g,n}\cap Y_j})\quad\forall\, n\geq 1.
\]
We denote $G:=\bigcup_{r=1}^\infty G_r$ and we recall that $\phi(X)=Y\setminus G$ by Lemma \ref{LPHI}.

Let us show by induction that
\begin{equation}\label{LGNPHILFN}
\Lambda_{g,n}\cap\phi(X)=\phi(\Lambda_{f,n})\qquad\forall\,n\geq 1.
\end{equation}
First note that
\[
\Lambda_{g,1}\cap\phi(X)=\bigcup_{j=1}^{N+1}(g_j(\overline{Y_j}\cap\phi(X))\cup g_j(\overline{Y_j}\cap G))\cap\phi(X)=\bigcup_{j=1}^{N+1}g_j(\overline{Y_j}\cap\phi(X))\cap\phi(X),
\]
since for any $j\in\{1,\dots,N+1\}$ we have $\overline{Y_j}\cap G=Y_j\cap G$ and therefore $g_j(\overline{Y_j}\cap G)=g(Y_j\cap G)\subset G$. Besides, if $y\in\overline{Y_j}\cap\phi(X)$ and $y\neq d_{j_0}$ then
\begin{equation}\label{GJFJ}
g_j(y)=\left\{\begin{array}{lcl}
\phi\circ f_j\circ\phi^{-1}(y)&\text{if}& 1\leq j < j_0+1\\

\phi\circ f_{j-1}\circ\phi^{-1}(y)&\text{if}& j_0+1\leq j \leq N+1
\end{array}
\right. ,
\end{equation}
and if $y\neq d_{j_0}$ then
\begin{equation}\label{GJ0FJ0}
g_{j_0}(y)=\phi\circ f_{j_0}\circ\phi^{-1}(y)\quad\text{and}\quad g_{j_0+1}(y)=\phi(f_{j_0}(\xi_0))+\lambda=\phi(\xi_1)+\lambda\in G.
\end{equation}
Denote $Z_1,Z_2,\dots,Z_{N}$ the sets defined by
\[
{Z}_{j_0}={Y}_{j_0}\cup{Y}_{j_0+1}
\quad\text{and}\quad
{Z}_j=\left\{\begin{array}{lcl}
{Y}_{j} &\text{if}& 1\leq j <j_0\\

{Y}_{j+1}&\text{if}& j_0< j \leq N
\end{array}
\right. ,
\]
then
\[
\Lambda_{g,1}\cap\phi(X)=\bigcup_{j=1}^{N}\phi\circ f_j\circ\phi^{-1}(\overline{Z_j}\cap\phi(X)).
\]
As $\overline{Z_j}\cap\phi(X)=\phi(\overline{X_j})$ for any $j\in\{1,\dots,N\}$, we deduce that
\[
\Lambda_{g,1}\cap\phi(X)=\bigcup_{j=1}^{N}\phi(f_j(\overline{X_j}))=\phi(\Lambda_{f,1}).
\]
Now assume that $\Lambda_{g,n}\cap\phi(X)=\phi(\Lambda_{f,n})$ for some $n\geq 1$. As before, we have
\[
\Lambda_{g,n+1}\cap\phi(X)=\bigcup_{j=1}^{N+1}g_j(\overline{\Lambda_{g,n}\cap Y_j}\cap\phi(X))\cap\phi(X)=
\bigcup_{j=1}^{N}\phi\circ f_j\circ\phi^{-1}(\overline{\Lambda_{g,n}\cap Z_j}\cap\phi(X)).
\]
To obtain that $\Lambda_{g,n+1}\cap\phi(X)=\phi(\Lambda_{f,n+1})$ and complete the induction, it is enough to show that
\begin{equation}\label{FUCKINGFERMETURE}
\overline{\Lambda_{g,n}\cap Z_j}\cap\phi(X)=\phi(\overline{\Lambda_{f,n}\cap X_j}).
\end{equation}
On the one hand, using the induction hypothesis, we obtain  $\Lambda_{g,n}\cap\phi(X)=\phi(\Lambda_{f,n})$. On the other hand we have  ${Z_j}\cap\phi(X)=\phi({X_j})$. As $\phi$ is injective, it follows that
\[
\Lambda_{g,n}\cap Z_j\cap\phi(X)=\phi(\Lambda_{f,n}\cap{X_j})\quad\forall\, j\in\{1,\dots,N\}.
\]
Let $y\in\phi(\overline{\Lambda_{f,n}\cap X_j}\setminus(\Lambda_{f,n}\cap X_j))$ and  $x\in\overline{\Lambda_{f,n}\cap X_j}\setminus(\Lambda_{f,n}\cap X_j)$ be such that $y=\phi(x)$. Then, $x\in\{c_{j-1},c_{j}\}$ and $\phi$ is continuous at $x$, since $x\notin\{\xi_r\}_{r\in\N}$. Let $\{x_n\}_{n\in\N}$ be a sequence of $\Lambda_{f,n}\cap X_j$ which converges to $x$. Then, the sequence $\{\phi(x_n)\}_{n\in\N}$ belongs to $\Lambda_{g,n}\cap Z_j$ and converges to $y=\phi(x)$. It follows that $y\in\overline{\Lambda_{g,n}\cap Z_j}\cap\phi(X)$ and  we have proved that $\phi(\overline{\Lambda_{f,n}\cap X_j})\subset\overline{\Lambda_{g,n}\cap Z_j}\cap\phi(X)$. To show the converse inclusion, take $y\in\overline{\Lambda_{g,n}\cap Z_j}\cap\phi(X)\setminus(\Lambda_{g,n}\cap Z_j)$. Then,  $y\in\overline{Z_j}\setminus Z_{j}\subset\{\phi(c_{j-1}),\phi(c_{j}),\phi(\xi_0)\}$. If $y=\phi(\xi_0)$, then $j=j_0$ and $y\in\phi(\Lambda_{f,n}\cap{X_{j_0}})$, since $\xi_0\in\Lambda_f\cap X_{j_0}$. As $c_{j-1}$ and $c_{j}$ are lr-recurrently visited, there exist two atoms $A_{j-1}$ and $A_j$ in the set of the atoms of generation $n$ of $f$ such that $c_{j-1}$ and $c_{j}$ belong to the interior of $A_{j-1}$ and $A_j$, respectively. As $\{c_{j-1},c_j\}\subset\overline{X_j}$, it follows that $c_{j-1}\in\overline{A_{j-1}\cap X_j}$ and  $c_{j}\in\overline{A_{j}\cap X_j}$. Recalling that $\Lambda_{f,n}$ is the union of all the atoms of generation $n$ of $f$, we deduce that $\{c_{j-1},c_j\}\subset\overline{\Lambda_{f,n}\cap X_j}$ and $y\in\phi(\overline{\Lambda_{f,n}\cap X_j})$. This ends the proof of \eqref{FUCKINGFERMETURE} and completes the proof by induction of \eqref{LGNPHILFN}.

Using the injectivity of $\phi$ and \eqref{LGNPHILFN} we obtain that
\begin{equation}\label{LAMBDAINTERPHI}
\Lambda_g\cap\phi(X)=\phi(\Lambda_f).
\end{equation}
Now we show that $\Lambda_g\cap G=\{\phi(\xi_r)+\lambda^r\}_{r\geq 1}$. To this end, we prove by induction that
\begin{equation}\label{LAMBDANINTERG}
\Lambda_{g,n}\cap G=\{\phi(\xi_r)+\lambda^r,1\leq r\leq n\}\cup\bigcup_{r= n+1}^\infty G_r\quad\forall\, n\geq 1.
\end{equation}

Using equations \eqref{GJFJ} and \eqref{GJ0FJ0} it follows that
\[
\Lambda_{g,1}\cap G=\bigcup_{j=1}^{N+1}(g_j(\overline{Y_j}\cap \phi(X))\cup g_j(\overline{Y_j}\cap G))\cap G=
\{g_{j_0+1}(d_{j_0})\}\cup\bigcup_{j=1}^{N+1}g_j(\overline{Y_j}\cap G)\cap G
\]
Recalling the inclusion of $G$ in the union of the sets $Y_1,Y_2,\dots,Y_{N+1}$, and the definition of $g$ in $G$, we obtain
\[
\Lambda_{g,1}\cap G=\{\phi(\xi_1)+\lambda\}\cup\bigcup_{r=2}^{N+1}G_r,
\]
which proves \eqref{LAMBDANINTERG} for $n=1$.

Now,  assume that \eqref{LAMBDANINTERG} holds for some $n\geq 1$.  As $d_{j_0}=\phi(\xi_0)\in\phi(\Lambda_f)=\Lambda_g\cap\phi(X)$ we have that $d_{j_0}\in\Lambda_{g}\subset \Lambda_{g,n}$. On the other hand, there exits a decreasing sequence in $\Lambda_f$ which converges to $\xi_0$, because $\Lambda_f$ is a Cantor set and $\xi_0$ is not a border of a gap of $\Lambda_f$. By \eqref{LAMBDAINTERPHI}, the image of this sequence by $\phi$
belongs to $\Lambda_g$ and it is decreasing by monotonicity of $\phi$. From the continuity of $\phi$ at $\xi_0$, it follows that  $d_{j_0}\in\overline{\Lambda_{g}\cap Y_{j_0+1}}\subset\overline{\Lambda_{g,n}\cap Y_{j_0+1}}$. Now, using once again equations \eqref{GJFJ} and \eqref{GJ0FJ0}, we deduce
\[
\Lambda_{g,n+1}\cap G=\bigcup_{j=1}^{N+1}(g_j(\overline{\Lambda_{g,n}\cap Y_j}\cap\phi(X))\cup g_j(\overline{\Lambda_{g,n}\cap Y_j}\cap G))\cap G=
\{g_{j_0+1}(d_{j_0})\}\cup\bigcup_{j=1}^{N+1}g_j(\overline{\Lambda_{g,n}\cap Y_j}\cap G)\cap G.
\]
As $\overline{\Lambda_{g,n}\cap Y_j}\setminus(\Lambda_{g,n}\cap Y_j)\subset\{d_{j-1},d_{j}\}$, and $d_0,d_1,\dots,d_{N+1}$ do not belong to $G$, we have that $g_j(\overline{\Lambda_{g,n}\cap Y_j}\cap G)=g(\Lambda_{g,n}\cap Y_j\cap G)$ for all $j\in\{1,2,\dots,N+1\}$. Therefore,
\[
\Lambda_{g,n+1}\cap G=\{\phi(\xi_1)+\lambda\}\cup g(\Lambda_{g,n}\cap G).
\]
Using the induction hypothesis, we obtain
\[
\Lambda_{g,n+1}\cap G=\{\phi(\xi_r)+\lambda^r,1\leq r\leq n+1\}\cup\bigcup_{r= n+2}^\infty G_r,
\]
which complete the proof of \eqref{LAMBDANINTERG}.

Note that \eqref{LAMBDANINTERG} can also be written as
\[
\Lambda_{g,n}\cap G=\{\phi(\xi_r)+\lambda^r\}_{r\geq 1}\cup\bigcup_{r= n+1}^\infty \text{Int}(G_r)\quad\forall\, n\geq 1,
\]
and recall that the sets $G_r$ are pairwise disjoint. This implies that $\Lambda_{g}\cap G=\{\phi(\xi_r)+\lambda^r\}_{r\geq 1}$,  which together with \eqref{LAMBDAINTERPHI} gives
\begin{equation}\label{LAMBDAG}
\Lambda_g=\phi(\Lambda_f)\cup\{\phi(\xi_r)+\lambda^r\}_{r\geq 1}.
\end{equation}

Now we show that $\Lambda_g$ is  a Cantor set. By definition of attractor $\Lambda_g$ is compact. Besides, it is totally disconnected because $g$ satisfies the separation property (see Theorem 5.2 of \cite{CGMU16}). It remains to show that  $\Lambda_g$ has no isolated point. Let $y\in\phi(\Lambda_f)$ and $x\in\Lambda_f$ be such that $y=\phi(x)$. As  $\Lambda_f$ is a Cantor set,
there exists a sequence $\{x_n\}_{n\in\N}$ in  $\Lambda_f\setminus\{x\}$ which converges to $x$ and $\{\phi(x_n)\}_{n\in\N}$ belongs to $\Lambda_g\setminus\{y\}$.  If $y\notin\{\phi(\xi_r)\}_{r\geq 1}$, then $\phi$ is continuous at $x$  and $\{\phi(x_n)\}_{n\in\N}$ converges to $y$. If $y=\phi(\xi_r)$ for some $r\geq 1$, then we can assume that $\{x_n\}_{n\in\N}$
is increasing, since $\xi_r$ does not belong to the boundaries of the gaps of the Cantor set $\Lambda_f$.  Using the left-continuity of $\phi$, we obtain that $\{\phi(x_n)\}_{n\in\N}$ converges to $y$. Now, let $y=\phi(\xi_r)+\lambda^r$ for some $r\geq 1$ and let $\{x_n\}_{n\in\N}$ be a decreasing sequence in $\Lambda_f$ which converges to $\xi_r$. Then,  $\{\phi(x_n)\}_{n\in\N}$ converges to $y$ and belongs to $\Lambda_g\setminus\{y\}$, since $y\notin\phi(X)$.

\medskip
\noindent{\bf P4)} Let us prove that $g_{j}(d_j)$ and $g_{j+1}(d_j)$ belong to $\widetilde{X}_g$ for every $j\in\{1,2,\dots,N\}$. First note that for any $x$ such that
$f^n(x)\notin\Delta_f\cup\{\xi_0\}$ for all $n\in\N$, we have
\begin{equation}\label{CONJUG}
g^n\circ\phi(x)=\phi\circ f^n(x)\quad\forall n\in\N \quad\text{and}\quad \phi(x)\in\widetilde{X}_g.
\end{equation}
Let $j\in\{1,2,\dots,N\}$ and $y\in\{g_{j}(d_j),g_{j+1}(d_j)\}$. If
$y\neq g_{j_0+1}(d_{j_0})$, then using \eqref{GJFJ} and \eqref{GJ0FJ0} we obtain that $y=\phi(x)$ for some point
$x\in\{f_j(c_j),f_{j+1}(c_j),f_{j-1}(c_{j-1}),f_{j}(c_{j-1}),f(\xi_0)\}$. Since $\{\xi_n\}_{n\in\N}$ is a well-cutting orbit and $f_{i}(c_i)$ and $f_{i+1}(c_i)$ belong to $\widetilde{X}_f$ for any $i\in\{1,2,\dots,{N-1}\}$, it follows from \eqref{CONJUG} that $y\in\widetilde{X}_g$. Now, if $y=g_{j_0+1}(d_{j_0})$ then by \eqref{GJ0FJ0}
we have that $g^n(y)\in G$ for all $n\in\N$, and therefore $y\in\widetilde{X}_g$.

\medskip
\noindent{\bf P5)} Let $i\in\{1,\dots,{N-1}\}$ and $l\in\{i,i+1\}$ be such that $\{f^n(f_l(c_i))\}_{n\in\N}$ is dense in $\Lambda_f$.
Let us denote $x_0:=f_l(c_i)$ and $y_0:=\phi(x_0)$. Using \eqref{GJFJ} we obtain that there exists $j\neq j_0$ such that $y_0\in\{g_j(d_j),g_{j+1}(d_j)\}$. Therefore, to prove {\bf P5} for $g$ it is enough to show that  $\{g^n(y_0)\}_{n\in\N}$ is dense in $\Lambda_g$. Applying \eqref{CONJUG} to $x_0$, we obtain that $y_0\in\widetilde{X}_g$ and that $g^n(y_0)=\phi(f^n(x_0))$ for all $n\in\N$. Besides, according to \eqref{LAMBDAINTERPHI} we have $\{\phi(f^n(x_0))\}_{n\in\N}\subset\Lambda_g$, since $\{f^n(x_0)\}_{n\in\N}\subset\Lambda_f$.

First let $y\in\Lambda_g\setminus\{\phi(\xi_r),\phi(\xi_r)+\lambda^r\}_{r\geq 1}$. Then,
$y\in\phi(\Lambda_f)$ and there exists $\{n_k\}_{k\in\N}$ such that $\{f^{n_k}(x_0)\}_{k\in\N}$ converges to $x:=\phi^{-1}(y)\in\Lambda_f$. Since $x\notin\{\xi_r\}_{r\geq 1}$, it follows that $\phi$ is continuous at $x$ and $\{g^{n_k}(y_0)\}_{k\in\N}$ converges to $y$.

Now, let $y=\phi(\xi_r)$ for some
$r\geq 1$. Since $c_i$ is lr-recurrently visited by $\{f^k(x_0)\}_{k\in\N}$, and $\Lambda_f\cap (\xi_r-\nu,\xi_r)\neq\emptyset$ for all $\nu>0$ (recall that $\xi_r$ is not border of gap) by Lemma \ref{DENSITY} we have that the orbit of $x_0$ accumulates from the left
on $\xi_r$. Using the left-continuity of $\phi$ we obtain that the orbit of $y_0$ accumulates
on $y=\phi(\xi_r)$. Using now $\Lambda_f\cap (\xi_r,\xi_{r}+\nu)\neq\emptyset$ for all $\nu>0$, we obtain that
there exists a subsequence of $\{f^{n}(x_0)\}_{n\in\N}$ which converges to $\xi_r$ from the right-hand side. The image  by $\phi$ of this subsequence converges  to $\phi(\xi_r)+\lambda^r$. It follows that $\phi(\xi_r)+\lambda^r$ is also a limit point of
$\{g^n(y_0)\}_{n\in\N}$.

\medskip
\noindent{\bf P6)}  Now we prove that all the discontinuities of $g$ are lr-recurrently visited by the orbits of the points of
$\widetilde{X}_g$. Let $j\in\{1,\dots,N\}$ and $y\in\widetilde{X}_g$. We are going to show that there exist
two sequences $\{n_k\}_{k\in\N}$ and $\{m_k\}_{k\in\N}$ going to infinity such that $g^{n_k}(y)<d_j<g^{m_k}(y)$ for all $k\in\N$ and $\{g^{n_k}(y)\}_{k\in\N}$ and $\{g^{m_k}(y)\}_{k\in\N}$ converge to $d_j$.

First, let us show the above assertion for $j\neq j_0$. We denote $c$ the point of $\Delta_f$ such that $\phi( c)=d_j$. If $y\in\widetilde{X}_g\cap\phi(X)$, then
\begin{equation}\label{CONJUGACY}
g^n(y)=\phi\circ f^n\circ\phi^{-1}(y)\quad \forall\,n\in\N,
\end{equation}
since $\widetilde{X}_g\cap\phi(X)$ is forward invariant by $g$. It follows that $x:=\phi^{-1}(y)$ belongs to $\widetilde{X}_f$ and $c$ is lr-recurrently visited by $\{f^k(x)\}_{k\in\N}$  by property {\bf P6} of $f$. Therefore, there exist two sequences $\{n_k\}_{k\in\N}$ and $\{m_k\}_{k\in\N}$ going to infinity such that $f^{n_k}(x)<c<f^{m_k}(x)$ for all $k\in\N$ and $\{f^{n_k}(x)\}_{k\in\N}$ and $\{f^{m_k}(x)\}_{k\in\N}$ converge to $c$. Using \eqref{CONJUGACY}, the monotonicity of $\phi$ and its continuity at $c$,
we deduce that $\{g^{n_k}(y)\}_{k\in\N}$ and $\{g^{m_k}(y)\}_{k\in\N}$ satisfy the required properties.

Now assume  $y\in G\subset\widetilde{X_g}$ and let $r\geq 1$ be such that $y\in G_r$. Then, $g^n(y)\in G_{r+n}$ for all $n\in\N$, that is
\begin{equation}\label{YDANSG}
\phi(f^n(\xi_r))<g^n(y)\leq \phi(f^n(\xi_r))+\lambda^{n+r}\quad\forall\,n\in\N.
\end{equation}
As $\xi_r\in\widetilde{X}_f$, we have that $c$ is lr-recurrently visited by $\{f^k(\xi_r)\}_{k\in\N}$. Therefore, there exists two sequences $\{n_k\}_{k\in\N}$ and $\{m_k\}_{k\in\N}$ going to infinity such that $f^{n_k}(\xi_r)<c<f^{m_k}(\xi_r)$ for all $k\in\N$ and $\{f^{n_k}(\xi_r)\}_{k\in\N}$ and $\{f^{m_k}(\xi_r)\}_{k\in\N}$ converge to $c$. Using \eqref{YDANSG} and the continuity of $\phi$ at $c$ we have that both $\{g^{n_k}(y)\}_{k\in\N}$ and $\{g^{m_k}(y)\}_{k\in\N}$
converge to $\phi( c)=d_j$. On the other hand, by monotonicity of $\phi$ and the left-hand side of \eqref{YDANSG}
we have that $d_j<g^{m_k}(y)$ for all $k\in\N$. Again by monotonicity of $\phi$, we have $\phi(f^{n_k}(\xi_r))<d_j$, which implies that $\phi(f^{n_k}(\xi_r))+\lambda^{n_k+r}<d_j$ because $d_j\notin G$. We deduce from  \eqref{YDANSG} that $g^{n_k}(y)<d_j$.

Finally, we show that $d_{j_0}=\phi(\xi_0)$ is lr-recurrently visited by the orbit of any point of $\widetilde{X}_g$. Let $d_j\in\Delta_g\setminus\{d_{j_0}\}$
be a discontinuity of $g$ such that $\{g^n(g_l(d_j))\}_{n_\in\N}$ is dense in $\Lambda_g$ for some $l\in\{j,j+1\}$ (we proved in {\bf P6}  that it exists). Since $d_j\neq d_{j_0}$ we already know that $d_j$ is lr-recurrently visited by $\{g^k(y)\}_{k\in\N}$ for any $y\in\widetilde{X}_g$. Now, since $\phi(\xi_0)$ is not a border of gap of $\Lambda_g$ (because $\xi_0$ is not a border of gap of $\Lambda_f$ and $\phi$ is continuous at this point) we can apply Lemma \ref{DENSITY} to deduce that $d_{j_0}$ is lr-recurrently visited by $\{g^k(y)\}_{k\in\N}$ for any $y\in\widetilde{X}_g$.
\end{proof}

\begin{lemma} \label{N01}If any atom of generation $1$ of $f$ contains at most one point of $\Delta_f$, then there exists a well-cutting orbit of $f$ such that any atom of generation $1$ of $g$ contains at most one point of $\Delta_g$.
\end{lemma}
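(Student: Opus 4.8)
The plan is to reduce the statement to a single geometric condition on the base point $\xi_0$ of the cutting orbit --- namely that $\xi_0$ lie in a generation-$1$ atom of $f$ carrying no discontinuity --- and then to show that such a choice is compatible with the well-cutting conditions. Write $A_i:=f_i(\overline{X_i})$ for the $N$ generation-$1$ atoms of $f$. Using the explicit images $g_j(\overline{Y_j})$ computed in the proof of Proposition \ref{PROPDELAMORT}, the $N+1$ generation-$1$ atoms of $g$ are, through the strictly increasing injection $\phi$, exactly the sets $\phi(A_i)$, except that the atom $A_{j_0}$ of the piece $X_{j_0}\ni\xi_0$ is split by the gap $G_1$ into $g_{j_0}(\overline{Y_{j_0}})=[\phi(f_{j_0}(c_{j_0-1})),\phi(\xi_1)]$ and $g_{j_0+1}(\overline{Y_{j_0+1}})=[\phi(\xi_1)+\lambda,\phi(f_{j_0}(c_{j_0}))]$, where $\xi_1=f(\xi_0)\in A_{j_0}$. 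Since $\Delta_g=\phi(\Delta_f\cup\{\xi_0\})\subset\phi(X)$ and $\phi$ is increasing and injective, a point $\phi(w)\in\Delta_g$ (with $w\in\Delta_f\cup\{\xi_0\}$) lies in a non-split atom $\phi(A_i)$ iff $w\in A_i$, and lies in one of the two split atoms iff $w\in A_{j_0}$ (its position relative to $\xi_1$ deciding the side). As $w\neq\xi_1$ for all such $w$ (the orbit $\{\xi_r\}$ avoids $\Delta_f$ and is not periodic), this yields, for every non-split atom,
\[
\#\big(\phi(A_i)\cap\Delta_g\big)=\#(A_i\cap\Delta_f)+\varepsilon_i,\qquad \varepsilon_i:=\begin{cases}1&\text{if }\xi_0\in A_i,\\0&\text{otherwise,}\end{cases}
\]
while the total number of points of $\Delta_g$ in the two split atoms equals $\#(A_{j_0}\cap\Delta_f)+\varepsilon_{j_0}$.

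The reduction is then immediate. Because the atoms of $f$ are pairwise disjoint (Lemma \ref{Atom_Unicity}) and $\xi_0\in\Lambda_f\subset\bigcup_iA_i$, the point $\xi_0$ belongs to exactly one atom $A_{i_0}$, so $\varepsilon_i=1$ only for $i=i_0$. Suppose I choose $\xi_0$ with $A_{i_0}\cap\Delta_f=\emptyset$. Then each non-split atom carries $\#(A_i\cap\Delta_f)\le 1$ point when $i\ne i_0$ (here the hypothesis on $f$ is used), and exactly one when $i=i_0$; and the two split atoms together carry $\#(A_{j_0}\cap\Delta_f)+\varepsilon_{j_0}\le 1$ point, since $\varepsilon_{j_0}=1$ forces $j_0=i_0$ and hence $A_{j_0}\cap\Delta_f=\emptyset$. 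Thus every generation-$1$ atom of $g$ contains at most one point of $\Delta_g$, irrespective of how the cut at $\xi_1$ distributes them.

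It remains to produce a well-cutting orbit whose base point lies in a discontinuity-free atom. A counting argument supplies one: the atoms being pairwise disjoint and $\#\Delta_f=N-1$, at most $N-1$ of the $N$ atoms meet $\Delta_f$, so some atom $A_{i^*}$ is discontinuity-free. I then claim $A_{i^*}\cap\Lambda_f$ is uncountable. The crucial point is that every piece $X_i$ meets $\Lambda_f$: fix any $x_0\in\Lambda_f\cap\widetilde X_f$ (this set is $\Lambda_f$ minus the countable set $X\setminus\widetilde X_f$, hence uncountable); forward invariance of the attractor gives $\{f^n(x_0)\}\subset\Lambda_f$, while {\bf P6} gives $c_k\in\Delta_{lr}(x_0)$ for every $k$. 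By definition of lr-recurrent visiting there are orbit points in $A\cap X_k$ and in $A\cap X_{k+1}$ for visited atoms $A\ni c_k$ of arbitrarily large generation, so there are points of $\Lambda_f$ inside both $X_k$ and $X_{k+1}$ accumulating on $c_k$. Letting $k$ run over $1,\dots,N-1$ shows $X_i\cap\Lambda_f\neq\emptyset$ for all $i$, and applying $f$ gives $A_i\cap\Lambda_f\neq\emptyset$ for all $i$. Thus $A_{i^*}\cap\Lambda_f$ is a non-empty clopen subset of the Cantor set $\Lambda_f$ (the finitely many atoms being disjoint closed intervals covering $\Lambda_f$), hence a non-empty perfect compact set, and therefore uncountable. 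Removing the countable set of points excluded by the definition of a well-cutting orbit (boundaries of gaps, the orbits of $c_0,c_N$ and of the $f_i(c_i),f_{i+1}(c_i)$, and $X\setminus\widetilde X_f$), an uncountable set remains; picking $\xi_0$ there and feeding the resulting well-cutting orbit into Proposition \ref{PROPDELAMORT} produces the desired $g$.

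The main obstacle is this last step: guaranteeing that the discontinuity-free atom is actually charged by the attractor, equivalently that every contraction piece meets $\Lambda_f$. This is exactly what forces the use of {\bf P6} together with the forward invariance of $\Lambda_f$; by contrast, the atom bookkeeping of the first two steps is routine once the images $g_j(\overline{Y_j})$ from Proposition \ref{PROPDELAMORT} are available.
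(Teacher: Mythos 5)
Your proof is correct and follows essentially the same strategy as the paper's: choose the base point $\xi_0$ of the well-cutting orbit inside a generation-$1$ atom of $f$ containing no discontinuity, show that such an atom meets $\Lambda_f$ in an uncountable set so that a well-cutting orbit can be started there, and transfer the atom/discontinuity bookkeeping to $g$ through $\phi$. Your explicit count of $\Delta_g$-points per atom and your route to $A_{i^*}\cap\Lambda_f\neq\emptyset$ via the orbit of a point of $\Lambda_f\cap\widetilde{X}_f$ are only minor variants of the paper's argument, which reaches non-emptiness via nested atoms around the endpoint of $X_{i_1}$ and concludes by contradiction using $B_i\cap\phi(X)=\phi(A_i)$.
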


\begin{proof} As $f$ satisfies {\bf P6}, any discontinuity of $f$ is contained in an atom of generation $1$ of $f$
and this atom is unique because of {\bf P2}. It follows that one of the $N$ atoms of generation $1$ of $f$ does not contain any discontinuity. Let us denote $A_{i_1}$ this atom, where  $i_1\in\{1,\dots,N\}$ is such that $A_{i_1}=\overline{f((c_{i_1-1},c_{i_1}))}$. Then, $A_{i_1}\cap\Lambda_{f,n}\neq\emptyset$ for all $n\geq 1$, since by {\bf P6} for any $n\geq 2$ there exists an atom of generation $n-1$ which contains $c_{i_1}$ in its interior. It follows that
$A_{i_1}\cap\Lambda_{f}\neq\emptyset$. Moreover, $A_{i_1}\cap\Lambda_{f}$ is compact, totally disconnected and any
point of $\Int(A_{i_1})\cap\Lambda_{f}$ is not isolated, because $\Lambda_f$ is a Cantor set. Now, as the atoms are compact and disjoint, if $x\in (A_{i_1}\setminus\Int(A_{i_1}))\cap\Lambda_{f}$ then $x$ is a border of gap of $\Lambda_f$ and there exists a sequence in $\Int(A_{i_1})\cap\Lambda_{f}$ which converges to $x$. We deduce that
$A_{i_1}\cap\Lambda_{f}$ is a Cantor set. Therefore, there exists $\xi_0\in A_{i_1}$ such that $\{f^{k}(\xi_{0})\}_{k\in\N}$ is a well-cutting orbit of $f$.

For every $i\in\{1,\dots,N\}$ let  $B_{i}=\overline{g((\phi(c_{i-1}),\phi(c_{i})))}$, where $\phi$ is the function defined in \eqref{PHI} with a well-cutting orbit of $f$ such that $\xi_0\in A_{i_1}$. Then, for any atom $B$ of generation 1 of $g$ there exists $i\in\{1,\dots,N\}$ such that $B\subseteq B_i$. Moreover, we can show that
\[
B_i\cap\phi(X)=\phi(A_i)\qquad\forall\, i\in\{1,\dots,N\},
\]
where $A_i:=\overline{f((c_{i-1},c_{i}))}$ is an atom of generation 1 of $f$. Now, let $d\neq d'\in\Delta_g$ and assume by contradiction that there exists an atom of generation 1 of $g$ which contains $d$ and $d'$. Then, there exits an atom of generation 1 of $f$ which contains two elements of
$\Delta_f\cup\{\xi_0\}$, which is a contradiction.
\end{proof}

\begin{proof}[Proof of part 2) of Theorem \ref{TH}] From Theorem \ref{TH3} we deduce that equality \eqref{COMP2TH} of Theorem \ref{TH} holds for any $n\geq n_0$, where $n_0$ is defined in Lemma \ref{PROCOMPNBNLR} and is not necessarily equal to 1. For any piecewise contracting map $f$, the integer
$n_0$ is bounded above by $n_{0,f}:=\min\{n\geq 1: \# A\cap\Delta_f\leq1\ \forall\,A\in\mathcal{A}_{f,n}\}$, where $\mathcal{A}_{f,n}$ is the set of the atoms of generation $n$ of $f$.
Obviously, if $f$ has only two contraction pieces  then $n_{0,f}=1$. Lemma \ref{N01} proves that if $f$ satisfies
{\bf P1-6} and $n_{0,f}\neq1$, then for a suitable choice of the well-cutting orbit of $f$,  we have $n_{0,g}=1$ for the map $g$ of Proposition \ref{PROPDELAMORT}.
\end{proof}

\vspace{5ex}

\noindent {\bf \emph{Acknowledgements.}}
EC and PG have been supported by the Math-Amsud Regional Program 16-MATH-06 (PHYSECO).
EC thanks the invitations of Instituto Venezolano de Investigaciones Cient\'{\i}ficas (IVIC-Venezuela) and Universidad de Valpara\'{\i}so (CIMFAV Chile), and the partial financial support of CSIC (Universidad de la Rep\'{u}blica, Uruguay) and ANII (Uruguay). AM and PG thank the invitation of  Universidad de la Rep\'{u}blica and the partial financial support of CSIC (Uruguay). AM thanks the financial support of the CIMFAV CID 04/03, and the members of the CIMFAV for their kindness
during his stay.

\bibliographystyle{amsplain}

\begin{thebibliography}{99}
%-------------------------------------------------------------------------
\bibitem{AB98}
P. Alessandri and V. Berth\'e,
Three distance theorems and combinatorics on words,
Enseignement Math\'{e}matique {\bf 44} (1998) 103--132.
%-------------------------------------------------------------------------
\bibitem{B06}
J. Br\'emont,
Dynamics of injective quasi-contractions,
Ergodic Theory and Dynamical Systems {\bf 26} (2006) 19--44.
%-------------------------------------------------------------------------
\bibitem{B93}
Y. Bugeaud,
Dynamique de certaines applications contractantes lin\'eaires par morceaux sur [0, 1[,
Comptes Rendus Acad. Sci. Paris, Ser. I {\bf 317} (1993) 575--578.
%-------------------------------------------------------------------------
\bibitem{BC99}
Y. Bugeaud and  J.-P. Conze,
Calcul de la dynamique de transformations lin\'eaires contractantes mod 1 et arbre de Farey,
Acta Arithmetica {\bf 88} (1999) 201--218.
%-------------------------------------------------------------------------
\bibitem{CGMU16}
E. Catsigeras, P. Guiraud, A. Meyroneinc and E. Ugalde,
On the asymptotic properties of piecewise contracting maps,
Dynamical Systems {\bf 67} (2016) 609--655.
%-------------------------------------------------------------------------

 \bibitem{C99}
R. Coutinho,
Din\^amica simb\'olica linear,
Ph.D. Thesis, Technical University of Lisbon (1999).
%-------------------------------------------------------------------------
 \bibitem{CFLM06}
R. Coutinho, B. Fernandez, R. Lima and A. Meyroneinc,
Discrete time piecewise affine models of genetic regulatory networks,
Journal of Mathematical Biology {\bf 52} (2006) 524--570.
%-------------------------------------------------------------------------
\bibitem{D99}
G. Didier,
Caract\'erisation des N-\'ecritures et application \`a l'\'etude des suites de complexit\'e ultimement $n + cste$,
Theoretical Computer Science {\bf 215}  (1999) 31--49.

\bibitem{GT88}
J.-M. Gambaudo  and Ch. Tresser,
On the dynamics of quasi-contractions,
Bulletin of the Brazilian Mathematical Society {\bf 19} (1988) 61--114.
%-------------------------------------------------------------------------
\bibitem{K75}
M.S. Keane,
Interval exchange transformations,
Math. Zeitsch. {\bf 141} (1975) 25--31.
%-------------------------------------------------------------------------
\bibitem{KR06}
B. Kruglikov and M. Rypdal,
A piecewise affine contracting map with positive entropy,
Discrete and Continuous Dynamical Systems {\bf 16} (2006) 393--394.

%-------------------------------------------------------------------------
\bibitem{LU06}
R. Lima and E. Ugalde,
Dynamical complexity of discrete-time regulatory networks,
Nonlinearity {\bf 19} (2006) 237--259.
%-------------------------------------------------------------------------
\bibitem{N1}
A. Nogueira, B. Pires and R. Rosales,
Asymptotically periodic piecewise contractions of the interval,
Nonlinearity {\bf 27} (2014) 1603--1610.
%-------------------------------------------------------------------------
\bibitem{N2}
A. Nogueira and B. Pires,
Dynamics of piecewise contractions of the interval,
Ergodic Theory and Dynamical Systems {\bf 35} (2015) 2198--2215.
%-------------------------------------------------------------------------
\bibitem{N3}
A. Nogueira, B. Pires and R. Rosales,
Topological dynamics of piecewise $\lambda$-affine maps,
Ergodic Theory and Dynamical Systems, DOI: https://doi.org/10.1017/etds.2016.104.
%-------------------------------------------------------------------------
\bibitem{P16}
B. Pires,
Invariant measures for piecewise continuous maps,
Comptes Rendus Acad. Sci. Paris, Ser. I {\bf 354} (2016) 717--722.
%-------------------------------------------------------------------------

\bibitem{V87}
P. Veerman,
Symbolic dynamics of order-preserving orbits,
Physica 29D (1987) 191-201.
%-----------------------------------------------------------------------
\bibitem{F13}
 S. Ferenczi,
Combinatorial methods for interval exchange transformations,
Southeast Asian Bulletin of Mathematics {\bf 37} (2013) 47--66.
%-------------------------------------------------------------------------
\bibitem{FZ08}
S. Ferenczi and C. Zamboni,
Languages of $k$-interval exchange transformations,
Bulletin London Mathematical Society {\bf 40} (2008) 705--714.
%-------------------------------------------------------------------------
\end{thebibliography}

\end{document}